\edef\savecatcodeat{\the\catcode`@}
\def\tb@ifSpecChars#1#2{#1}
\def\tb@ifNoSpecChars#1#2{#2}
\def\tableau{%
  \bgroup
  \@ifstar{\let\Tif\tb@ifNoSpecChars\tb@tableauB}
          {\let\Tif\tb@ifSpecChars\tb@tableauB}}
\def\tb@tableauB{
  \@ifnextchar[{\tb@tableauC}{\tb@tableauC[]}}
\def\tb@tableauC[#1]{\hbox\bgroup%
    \let\\=\cr
    \def\bl{\global\let\tbcellF\tb@cellNF}%
    \def\tf{\global\let\tbcellF\tb@cellH}
%
    \dimen2=\ht\strutbox \advance\dimen2 by\dp\strutbox%
    \ifx\baselinestretch\undefined\relax%
    \else%
       \dimen0=100sp \dimen0=\baselinestretch\dimen0%
       \dimen2=100\dimen2 \divide\dimen2 by\dimen0%
    \fi%
    \let\tpos\tb@vcenter
    \tb@initYoung
    \tb@options#1\eoo
    \let\arrow\tb@arrow%
    \dimen0=\Tscale\dimen2%
    \dimen1=\dimen0 \advance\dimen1 by \tb@fframe%
    \lineskip=0pt\baselineskip=0pt
%
    \def\tb@nothing{}%
    \def\endcellno{$\rss\egroup\bss\egroup}
    \def\endcell{\endcellno\kern-\dimen0}
    \def\begincell{\vbox to\dimen0\bgroup\vss\hbox to\dimen0\bgroup\hss$}%
    \let\overlay\tb@overlay%
    \let\fl\tb@fl%
    \let\lss\hss\let\rss\hss\let\tss\vss\let\bss\vss
    \def\mkcell##1{
        \let\tbcellF\tb@cellD
        \def\tb@cellarg{##1}
        \ifx\tb@cellarg\tb@nothing\let\tb@cellarg\tb@cellE\fi%
        \begincell\tb@cellarg\endcellno
        \tbcellF}
    \let\savecellF\tbcellF
     \Tif{\catcode`,=4\catcode`|=\active}{}\tb@tableauD}%
\let\tb@savetableauD\tableauD
\gdef\tableauD#1{%
  \Tif{
    \mathcode`|="8000 \mathcode`*="8000%
    \mathcode`~="8000 \mathcode`@="8000%
    \def@{\bullet}%
    \let|\cr
    \let*\tf
    \let~\sk
  }{}%
  \tpos{\tabskip=0pt\halign{&\mkcell{##}\cr#1\crcr}}%
  \global\let\tbcellF\savecellF
  \egroup
  \egroup}
\let\tb@tableauD\tableauD
\let\tableauD\tb@savetableauD
\let\tb@savetableauD\undefined
\def\tb@options#1{\ifx#1\eoo\relax\else\tb@option#1\expandafter\tb@options\fi}
\def\tb@option#1{%
  \if#1t\let\tpos\tb@vtop\fi
  \if#1c\let\tpos\tb@vcenter\fi
  \if#1b\let\tpos\vbox\fi
  \if#1F\tb@initFerrers\fi
  \if#1Y\tb@initYoung\fi
  \if#1s\tb@initSmall\fi
  \if#1m\tb@initMedium\fi
  \if#1l\tb@initLarge\fi
  \if#1p\tb@initPartition\fi
  \if#1a\tb@initArrow\fi
}
\def\tb@vcenter#1{\ifmmode\vcenter{#1}\else$\vcenter{#1}$\fi}
\def\tb@vtop#1{\hbox{\raise\ht\strutbox\hbox{\lower\dimen0\vtop{#1}}}}
\def\tb@initPartition{\def\Tscale{.3}}
\def\tb@initSmall{\def\Tscale{1}}
\def\tb@initMedium{\def\Tscale{2}}
\def\tb@initLarge{\def\Tscale{3}}
\def\tb@initArrow{\dimen2=1.25em}
\def\tb@initYoung{%
  \def\tb@cellE{}
  \let\tb@cellD\tb@cellN
  \def\sk{\global\let\tbcellF\tb@cellNF}}
\def\tb@initFerrers{%
  \def\tb@cellE{\bullet}
  \let\tb@cellD\tb@cellNF
  \def\sk{\bullet}}
\def\tb@sframe#1{%
  \vbox to0pt{
    \vss
    \hbox to0pt{%
      \hss
      \vbox to\dimen1{
        \hrule depth #1 height0pt
        \vss
        \hbox to\dimen1{
          \vrule width #1 height\dimen1
          \hss
          \vrule width #1
          }%
        \vss
        \hrule height #1 depth 0in
        }%
      \kern-\tb@hframe
      }%
    \kern-\tb@hframe}}
\def\tb@hframe{.2pt}\def\tb@fframe{.4pt}\def\tb@bframe{2pt}
\def\tb@cellH{\tb@sframe{\tb@bframe}}       
\def\tb@cellNF{}                            
\def\tb@cellN{\tb@sframe{\tb@fframe}}       
\let\tbcellF\tb@cellN                       
\def\tb@rpad{1pt}
\def\tb@lpad{1pt}
\def\tb@tpad{1.8pt}
\def\tb@bpad{1.8pt}
\def\tb@overlay{\endcell\@ifnextchar[{\tb@overlaya}{\begincell}}
\def\tb@overlaya[#1]{\vbox to\dimen0\bgroup%
  \tb@overlayoptions#1\eoo%
  \tss\hbox to\dimen0\bgroup\lss$}
\def\tb@overlayoptions#1{\ifx#1\eoo\relax\else\tb@overlayoption#1\expandafter\tb@overlayoptions\fi}
\def\tb@overlayoption#1{
  \if#1t\def\tss{\vskip\tb@tpad}\let\bss\vss\fi
  \if#1c\let\tss\vss\let\bss\vss\fi
  \if#1b\def\bss{\vskip\tb@bpad}\let\tss\vss\fi
  \if#1l\def\lss{\hskip\tb@lpad}\let\rss\hss\fi
  \if#1m\let\lss\hss\let\rss\hss\fi
  \if#1r\def\rss{\hskip\tb@rpad}\let\lss\hss\fi
}
\def\tb@fl{\endcell\begincell\vrule depth 0pt width \dimen0 height \dimen0 \endcell\begincell}
\def\tb@arrowpad{.5}
\newoptcommand{\tb@arrow}{\@ne}[2]{%
  \endcell
   \begingroup%
   \let\dg@getnodesize\tb@getnodesize
   \dg@USERSIZE=#1\relax%
   \ifnum\dg@USERSIZE<\@ne \dg@USERSIZE=\@ne \fi%
   \dg@parse{#2}%
   \dg@label{\tb@draw{#1}{#2}}}
\def\tb@getnodesize#1#2#3#4#5{\dimen3=\tb@arrowpad\dimen2 #4=\dimen3 #5=\dimen3\relax}
\def\tb@getnodesize#1#2#3#4#5{\ifnum#2=0\ifnum#3=0\tb@getnodesizetail{#4}{#5}\else\tb@getnodesizehead{#4}{#5}\fi\else\tb@getnodesizehead{#4}{#5}\fi}
\def\tb@getnodesizetail#1#2{\dimen3=.5\dimen2 #1=\dimen3 #2=\dimen3}
\def\tb@getnodesizehead#1#2{\dimen3=.5\dimen2 #1=\dimen3 #2=\dimen3}
\def\tb@draw#1#2#3#4{%
        \dg@X=0\dg@Y=0\dg@XGRID=1\dg@YGRID=1\unitlength=.001\dimen0%
        \dg@LBLOFF=\dgLABELOFFSET \divide\dg@LBLOFF\unitlength%
        \dg@drawcalc
        \begincell
        \let\lams@arrow\tb@lams@arrow
        \begin{picture}(0,0)\begingroup\dg@draw{#1}{#2}{#3}{#4}\end{picture}%
        \endcell
        \endgroup
        \begincell}
\def\tb@lams@arrow#1#2{%
 \lams@firstx\z@\lams@firsty\z@
 \lams@lastx#1\relax\lams@lasty#2\relax
 \lams@center\z@
 %
 \N@false\E@false\H@false\V@false
 \ifdim\lams@lastx>\z@\E@true\fi
 \ifdim\lams@lastx=\z@\V@true\fi
 \ifdim\lams@lasty>\z@\N@true\fi
 \ifdim\lams@lasty=\z@\H@true\fi
 \NESW@false
 \ifN@\ifE@\NESW@true\fi\else\ifE@\else\NESW@true\fi\fi
 %
 \ifH@\else\ifV@\else
  \lams@slope
  \ifnum\lams@tani>\lams@tanii
   \lams@ht\ten@\p@\lams@wd\ten@\p@
   \multiply\lams@wd\lams@tanii\divide\lams@wd\lams@tani
  \else
   \lams@wd\ten@\p@\lams@ht\ten@\p@
   \divide\lams@ht\lams@tanii\multiply\lams@ht\lams@tani
  \fi
 \fi\fi
 %
 \ifH@  \lams@harrow
 \else\ifV@ \lams@varrow
 \else \lams@darrow
 \fi\fi
}
\let\savecatcodeat\undefined
\newcommand{\tcercle}[1]{\ensuremath{\setlength{\unitlength}{1ex}\begin{picture}(2.8,2.8)\put(1.4,1.4){\circle{2.8}\makebox(-5.6,0){#1}}\end{picture}}}
\newcommand{\gcercle}{\ensuremath{\setlength{\unitlength}{1ex}\begin{picture}(5,5)\put(2.5,2.5){\circle{5}}\end{picture}}}
\def\La{\Lambda}
\let\la\lambda
\let\ta\theta
\let\Om\Omega
\newcommand{\LL}{\ensuremath{\langle\!\langle}}
\newcommand{\RR}{\ensuremath{\rangle\!\rangle}}
\newtheorem{theorem}{Theorem}
\newtheorem{lemma}[theorem]{Lemma}
\newtheorem{proposition}[theorem]{Proposition}
\newtheorem{identity}[theorem]{Identity}
\theoremstyle{definition}
\theoremstyle{remark}
\newtheorem{remark}[theorem]{Remark}
\newtheorem*{acknow}{Acknowledgments}
\def\la{\lambda}
\def\ga{\gamma}
\renewcommand{\geq}{\geqslant}
\renewcommand{\leq}{\leqslant}
\newcommand{\Par}[3]{P_{#1,#2}^{[#3]}}
\newcommand{\Parsh}[3]{P_{#1,#2}^{[#3],+}}
\newcommand{\PPar}[3]{\mathcal{P}_{#1,#2}^{[#3]}}
\newcommand{\EPPar}[3]{\mathcal{E}\mathcal{P}_{#1,#2}^{[#3]}}
\newcommand{\Del}[3]{\Delta_{#1,#2}^{[#3]}}
\newcommand{\Delsh}[3]{\Delta_{#1,#2}^{[#3],+}}
\begin{document}

\title[A normalization
formula for the Jack polynomials in superspace and an identity on partitions]
{A normalization
formula for the Jack polynomials in superspace and an identity on partitions}

\author{Luc Lapointe}  \thanks{L. L. was partially supported by the
Anillo Ecuaciones Asociadas a Reticulados financed by the World Bank
through the Programa Bicentenario de Ciencia y Tecnolog\'{\i}a, and by
the Programa Reticulados y Ecuaciones of the Universidad de
Talca.}
\address{Instituto de Matem\'atica y F\'{\i}sica,
Universidad de Talca, Casilla 747, Talca, Chile}
\email{lapointe@inst-mat.utalca.cl}
\author{Yvan Le Borgne}
\thanks{Y.L.B. was partially supported by the French Agence Nationale de la Recherche, 
projects SADA ANR-05-BLAN-0372 and MARS ANR-06-BLAN-0193.
}
\address{CNRS, Universit\'e de Bordeaux, LaBRI, 351 Cours de la
Lib\'eration, 33405 Talence Cedex, France}
 \email{yvan.leborgne@labri.fr}
\author{Philippe Nadeau}
\thanks{P.N. was supported by the Austrian
Science Foundation FWF, grant S9607-N13, in the framework of the
National Research Network ``Analytic Combinatorics and Probabilistic
Number Theory''.}
\address{Fakult\"at f\"ur Mathematik, Universit\"at Wien,
    Nordbergstrasse 15, 1090 Vienna, Austria}
\email{philippe.nadeau@univie.ac.at}


\begin{abstract}   We prove a conjecture of \cite{DLM2} giving
a closed form formula for the
norm of the Jack polynomials in superspace with respect to a certain
scalar product.  The proof is mainly combinatorial and relies on the
explicit expression in terms of admissible tableaux of the non-symmetric
Jack polynomials.  In the final step of the proof appears
an identity on weighted sums of partitions that we demonstrate
using the methods of Gessel-Viennot.
\end{abstract}

\keywords{Symmetric functions, superspace, partitions, Gessel-Viennot}

\maketitle

\section{Introduction}

Let
$(x,\theta)=(x_1, \cdots x_N,\ta_1, \cdots \ta_N)$ be a collection of $2N$
variables, called respectively bosonic and fermionic (or
anticommuting or Grassmannian), obeying the relations
\begin{equation}
x_ix_j=x_jx_i, \, \qquad
x_i\ta_j=\ta_jx_i\, \qquad {\rm and} \qquad \ta_i\ta_j=-\ta_j\ta_i \qquad (\Rightarrow \theta_i^2=0)\;.
\end{equation}
We call symmetric functions in superspace the
ring of
polynomials in these variables
over the field $\mathbb Q$ that are
invariant  under the simultaneous interchange of $x_i
\leftrightarrow x_j$ and $\theta_i \leftrightarrow \theta_j$ for any
$i,j$.  That is, defining
\begin{equation}
\mathcal K_{\sigma} f(x_1,\dots,x_N,\theta_1,\dots,\theta_N):=
f(x_{\sigma(1)},\dots,x_{\sigma(N)},\theta_{\sigma(1)},\dots,\theta_{\sigma(N)})
\, ,\qquad \sigma \in S_N
\, ,
\end{equation}
we have that a polynomial $f(x_1,\dots,x_N,\theta_1,\dots,\theta_N)$
is a symmetric function in superspace iff
\begin{equation}
\mathcal K_{\sigma} f(x_1,\dots,x_N,\theta_1,\dots,\theta_N)
= f(x_1,\dots,x_N,\theta_1,\dots,\theta_N)
\end{equation}
for all permutations $\sigma$ in the symmetric group $S_N$.

Bases of the ring of symmetric functions in superspace  can be indexed by
superpartitions.  A superpartition $\Lambda$ is of the form
\begin{equation}
\Lambda:=(\La^a;\La^s)=(\Lambda_1,\ldots,\Lambda_m;\Lambda_{m+1},\ldots,\Lambda_{N})\,
,\end{equation}
where
\begin{equation}
\Lambda_1 > \Lambda_2 > \cdots > \Lambda_m \geq 0 \qquad {\rm and}
\qquad \Lambda_{m+1} \geq \Lambda_{m+2} \geq \cdots \geq \Lambda_N \geq 0 \, .
\end{equation}
In other words,
$\La^a$ is a partition with distinct parts
(one of them possibly equal to zero), and $\La^s$ is an ordinary
partition.
The degree of
$\Lambda$ is
$|\Lambda|=\Lambda_1+\cdots+\Lambda_N$ while its fermionic degree is
$m$.  The length $\ell(\Lambda)$ of $\Lambda$ is $m+\ell(\Lambda^s)$, where
$\ell(\Lambda^s)$ is the number of non-zero parts in the partition $\Lambda^s$
(the usual length of a partition).
Given a fixed degree $n$ and fermionic degree $m$,
a superpartition that will be especially relevant for this work is
\begin{equation}\label{minide}
\Lambda_{\mathrm{min}}:=(\delta_m\,;\,
1^{\ell_{n,m}}\,) \, ,
\end{equation} where
\begin{equation}
\delta_m:=(m-1,m-2,\ldots,0)  \quad {\rm and} \quad
\ell_{n,m}:=n-\frac{m(m-1)}{2}
\, .
\end{equation}
The superpartition
$\Lambda_{\mathrm{min}}$ is the minimal one among the superpartitions of
 degree $n$ and fermionic degree $m$ in some order on superpartitions
 generalizing
the dominance order on partitions (see \cite{DLM2}).  Note that it will
always be clear from the context what $n$ and $m$ are.

A natural basis for the ring of symmetric functions in superspace
is given by the monomial functions:
\begin{equation} m_{\Lambda}= \frac{1}{f_{\Lambda^s}}
\sum_{\sigma\in S_{N}} {\mathcal K}_{\sigma} \, \theta_1 \cdots
  \theta_m \, x^{\Lambda} \, ,
\end{equation}
where
\begin{equation}
x^{\Lambda} :=
x_1^{\Lambda_1} \cdots x_{m}^{\Lambda_m} x_{m+1}^{\Lambda_{m+1}} \cdots x_{N}^{\Lambda_N}
\end{equation}
and
\begin{equation} \label{equadeff}
f_{\Lambda^s} = \prod_{i\geq 0} m_i({\Lambda^s})! \, ,
\end{equation}
with $m_i({\Lambda^s})$ the number of $i$'s in the partition $\Lambda^s$.

A less trivial basis of the the ring of symmetric functions in
superspace is given by the Jack polynomials in superspace,
$J_{\Lambda}$, which generalize the usual Jack polynomials.
These polynomials, depending on a parameter $\alpha$, arose as
eigenfunctions of a supersymmetric quantum-mechanical many-body problem.
An explicit definition of the Jack polynomials in
superspace involving
non-symmetric Jack polynomials will be given in
Section~\ref{subsjack}.

The main point of this article is to prove a conjecture,
stated in \cite{DLM2},
giving an explicit expression for
the coefficient $c_{\Lambda}^{\mathrm{min}}(\alpha)$
of $\tilde m_{\Lambda_{\mathrm{min}}} :=(\ell_{n,m}!) m_{\Lambda_{\mathrm{min}}}$ in
$J_{\Lambda}$, where $n=|\Lambda|$ and $m$ is
the fermionic degree of $\Lambda$ (see Proposition~\ref{propo}).
The relevance of this conjecture is that it gives
as a corollary an explicit form for the norm of the Jack polynomials in superspace
with respect to a certain scalar product.  To be more precise,
for a superpartition $\Lambda$,
let the corresponding
power sum products in superspace be given
by
\begin{equation}
p_\La:=\tilde{p}_{\La_1}\ldots\tilde{p}_{\La_m}p_{\La_{m+1}}\cdots
p_{\La_N}\quad\text{with}\qquad
p_n:=m_{(;n)}\quad\text{and}\quad\tilde{p}_k:=m_{(k;0)}\,
,\end{equation} and define
the scalar product:
\begin{equation} \label{defscalprodcomb} \LL
\, {p_\La} \, | \, {p_\Om } \, \RR_{\alpha}:=(-1)^{m(m-1)/2}z_\La
(\alpha)\delta_{\La,\Om}\,,\qquad
z_\La(\alpha):=\alpha^{{\ell}(\La)} \prod_{i \geq 1} i^{m_i(\Lambda^s)} m_i(\Lambda^s)!
\, .
\end{equation}
As shown in \cite{DLM2}, the Jack polynomials in superspace are such that
\begin{equation}
 \LL
\, {J_\La} \, | \, {J_\Om } \, \RR_{\alpha} = \alpha^{m+\ell_{n,m}}
\frac{c_{\Lambda}^{\mathrm{min}}(\alpha)}{c_{\Lambda'}^{\mathrm{min}}(1/\alpha)}
\, \delta_{\Lambda,\Omega}\, ,
\end{equation}
where $\Lambda'$, the conjugate of $\Lambda$, will be described at the
end of Section~\ref{subs21}. Obtaining an explicit expression for
$c_{\Lambda}^{\mathrm{min}}(\alpha)$ thus immediately gives a closed form for
the norm of the Jack polynomials in superspace with respect to this scalar
product.  We should point out that these results are natural analogs of classical
results on Jack polynomials (see for instance \cite{Mac}).

The proof of Proposition~\ref{propo} relies on the
explicit expressions for non-symmetric Jack polynomials in terms
of admissible tableaux given in \cite{sahi}.  An interesting by-product of the proof
is that it leads to an identity on partitions (see Identity~\ref{iden2})
that we believe is worth stating here in the special case $\gamma=0^{m-1}$.

\begin{identity} \label{iden20}
For $i=1,\dots,m$,
let $\lambda^{(i)}$ be a partition of length $i$ with no parts
larger than $m$.
We say that $\lambda^{(1)},\dots,\lambda^{(m)}$ are non-intersecting
if  the $j$-th parts of $\lambda^{(j)},\lambda^{(j+1)},\dots,\lambda^{(m)}$
are distinct for $j=1,\dots,m$.  In particular, this implies that
$[\lambda^{(1)}_1,\dots,\lambda^{(m)}_1]$ is a permutation in $S_m$.
We define
$\mathcal V_0$ to be the set of
$(\lambda^{(1)},\dots,\lambda^{(m)})$ such that
$\lambda^{(1)},\dots,\lambda^{(m)}$ are non-intersecting.
We say that $(i,j)$ is critical in
$(\lambda^{(1)},\dots,\lambda^{(m)}) \in \mathcal V_0$ if
$i \geq j \geq 2$ and $\lambda^{(i)}_j=\lambda^{(i)}_{j-1}$.
Finally,  let
$a_1,\dots,a_m$ and $b_1,\dots,b_{m-1}$
be indeterminates.  We have
\begin{equation} \label{idenequa}
\prod_{1\leq j < i \leq m} (a_i+1-a_j)
=\sum_{(\lambda^{(1)},\dots,\lambda^{(m)}) \in \mathcal V_0}
{\rm sgn}([\lambda_1^{(1)},\dots,\lambda_1^{(m)}]) \,
\prod_{(i,j)~{\rm critical}} (a_{\lambda^{(i)}_j} +
b_{j-1}) \, .
\end{equation}
\end{identity}
Observe that the L.H.S. does not depend on the $b_i$'s while the R.H.S. does.
The proof we provide of this identity
relies crucially on the identification of the R.H.S. of \eqref{idenequa}
as a determinant using the methods of
Gessel-Viennot \cite{LGV}.

\section{Definitions}

\subsection{Superpartitions} \label{subs21}
Superpartitions were defined in the introduction.  We describe here a
diagrammatic representation of superpartitions that
extends the notion of Ferrers' diagram.  Recall \cite{Mac}
that the Ferrers' diagram
of the partition $\lambda=(\lambda_1,\dots,\lambda_r)$
is the set of cells in $\mathbb Z^2_{\geq 1}$ such that
$1 \leq i \leq r$ and $1 \leq j \leq \lambda_i$.  We use here the convention
in which $i$ increases as one goes down.  For instance, to
$\lambda=(5,3,1,1)$ corresponds the diagram
\begin{equation}
{\tableau[scY]{&&&& \\ & &\\ \\ \\  }}
\end{equation}

To every
superpartition $\Lambda$, we can associate a unique partition $\Lambda^*$
obtained by deleting the semicolon and reordering the parts in
non-increasing order.  The diagram associated to
$\Lambda$, denoted by $D[\Lambda]$,
is obtained by first drawing the Ferrers' diagram associated to
$\Lambda^*$ and then adding a circle at the end of each row corresponding
to an entry of
$\Lambda^a$.
If an entry  of $\Lambda^a$ coincides with some entries
of $\Lambda^s$, the row corresponding to that entry in $D[\Lambda]$
is considered to be the topmost one.
For instance, if
$\Lambda=(3,1,0;5,3,2)$, we have $\Lambda^*=(5,3,3,2,1,0)$, and thus
\begin{equation} \label{exdia}D([3,1,0;5,3,2]
  )={\tableau[scY]{&&&& \\ &&&\bl\tcercle{}\\ && \\ &  \\ & \tcercle{}\bl
\\\tcercle{}\bl\\ }}
\end{equation}
Note that with this definition, if the circles are considered as cells then
$D[\Lambda]$ is still a partition.  It is thus natural to define $\Lambda'$,
the conjugate of $\Lambda$, to be the superpartition obtained by transposing
the diagram of $D[\Lambda]$ with respect to the main diagonal.  Using
the example above, one easily sees that $(3,1,0;5,3,2)'=(5,4,1;3,1)$.

\subsection{Non-symmetric Jack polynomials}

The non-symmetric Jack polynomials were first studied in \cite{Op} (although
they had appeared before in physics as eigenfunctions of certain Dunkl-type operators
\cite{Pas}).
These
are polynomials $E_{\eta}(x;\alpha)$ in a given number $N$ of variables
$x=x_1,\dots,x_N$, depending on
a formal parameter $\alpha$ and indexed by compositions.
For our purposes, we will reproduce
the explicit combinatorial formula given in \cite{sahi}.
Let $\eta \in \mathbb Z^N_{\geq 0}$ be a composition with $N$ parts
(some of them possibly equal to zero).  The diagram of
$\eta$ is the set of cells in $\mathbb Z^2_{\geq 1}$ such that
$1 \leq i \leq N$ and $1 \leq j \leq \eta_i$.  For instance,
if $\eta=(0,1,3,0,0,6,2,5)$, the diagram of $\eta$ is
\begin{equation}
{\tableau[scY]
{\bl \bullet \\  \\ && \\ \bl \bullet \\ \bl \bullet \\  & & & & &  \\
 & \\
& & & &  \\ }}
\end{equation}
where a $\bullet$ represents an entry of length zero.   For each cell
$s=(i,j) \in \eta$, we define its arm-length ${\bf a}_{\eta}(s)$, leg-length
${\mathbf l}_{\eta}(s)$ and  $\alpha$-hooklength $d_{\eta}(s)$ by:
\begin{eqnarray} \nonumber
{\bf a}_{\eta}(s) & = & \eta_i -j \\ \nonumber
{\bf l}_{\eta}'(s) & = &
\# \{ k=1,\dots,i-1 \, | \, j \leq \eta_k+1 \leq \eta_i  \}  \\ \nonumber
{\bf l}_{\eta}''(s) & = &
\# \{ k=i+1,\dots,N \, | \, j \leq \eta_k \leq \eta_i  \} \\\nonumber
{\bf l}_{\eta}(s) & = & {\bf l}_{\eta}'(s) +
{\bf l}_{\eta}''(s) \\ \nonumber
d_{\eta}(s) & = &  \alpha( {\bf a}_{\eta}(s)+1) + {\bf l}_{\eta}(s)+1.
\end{eqnarray}
A diagrammatic representation of these parameters is provided in
Figure~\ref{fig:alphahooklengthdefinition}.
\begin{figure}
\centering
\psfrag{1}{$1$}
\psfrag{a}{$\alpha$}
\psfrag{+1}{{+}$1$}
\psfrag{i}{$i$}
\psfrag{j}{$j$}
\psfrag{Lprime}{$L'(i,j)$}
\psfrag{Ldoubleprime}{$L''(i,j)$}
\psfrag{lprime}{${\bf l}_{\eta}'(i,j)=3$}
\psfrag{ldoubleprime}{${\bf l}_{\eta}''(i,j)=4$}
\includegraphics[height=8cm]{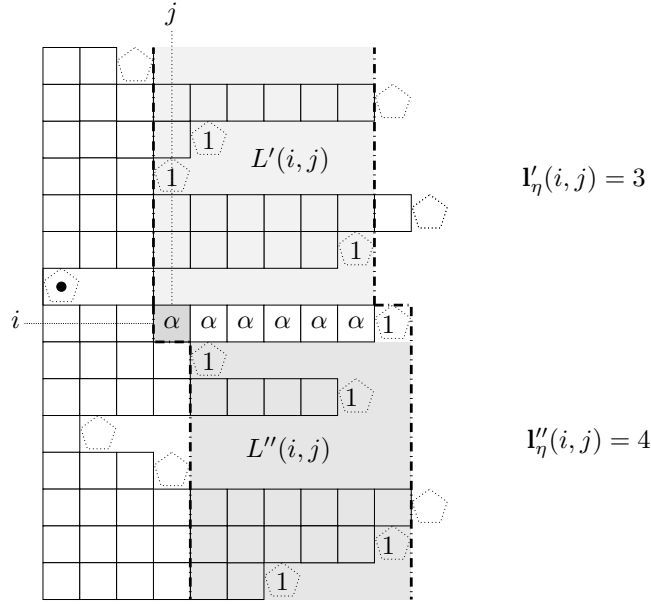}
\caption{Diagrammatic representation of the $\alpha$-hooklength of the
  cell $s=(i,j)=(8,4)$. We add a (dotted) pentagonal cell at the end
  of each row. The three terms $1+{\bf l}_{\eta}'(s)+{\bf
  l}_{\eta}''(s)$ of the $\alpha$-hook length count respectively the
  pentagonal cell of row $i$, the number of pentagonal cells that
  belong to the set $L'(s) = \{(k,l) \, | \,  k < i \mbox{ and } j \leq l
  \leq \eta_i\}$ and the number of pentagonal cells that belong to
  $L''(s)= \{(k,l) \, | \, i < k \mbox{ and } j+1 \leq l \leq \eta_i+1 \}
  $. The coefficient ${\bf a}_{\eta}(s)+1$ of $\alpha$ counts the
  cells in row $i$ from $(i,j)$ to $(i,\eta_i)$.  In this example we
  have $d_{\eta}(s) = (1+3+4)+6\alpha$.
\label{fig:alphahooklengthdefinition}
}
\end{figure}
An explicit formula for $E_{\eta}(x;\alpha)$ is given in terms of certain
tableaux called
$0$-admissible tableaux.  A 0-admissible tableau $T$ of shape $\eta$
is a filling of the cells of $\eta$ with letters belonging to
 $\{1,2, \dots, N \}$
satisfying the following properties:
\begin{enumerate}
\item[(1)]  There are never two identical letters in the same column;
\item[(2)] If the cell $(i,j)$ is filled with letter $c$, then
a letter $c$ cannot occur in column $j+1$ in a row below row $i$;
\item[(3)] In the first column, a letter $i$ cannot occur in a row
below row $i$.
\end{enumerate}
A cell $(i,j)$ in a 0-admissible
tableau is called 0-critical if either:
\begin{enumerate}
\item[(a)] $j>1$ and cell $(i,j-1)$
is filled with the same letter as cell $(i,j)$
\item[(b)] $j=1$ and cell $(i,j)=(i,1)$ is filled with letter $i$.
\end{enumerate}
\begin{remark} As observed in \cite{sahi},
conditions (3) and (b) can be made superfluous if one defines
a tableau $T^0$ obtained from $T$ by adding a column $0$ filled with an $i$ in
row $i$ for $i=1,\dots,N$.  In this case $T$ is $0$-admissible if $T^0$
satisfies (1) and (2).  And $s$ is $0$-critical if it satisfies (a) when
considered in $T^0$.
\end{remark}

\begin{figure}
\psfrag{1}{$1$}
\psfrag{2}{$2$}
\psfrag{3}{$3$}
\psfrag{4}{$4$}
\psfrag{5}{$5$}
\psfrag{6}{$6$}
\psfrag{7}{$7$}
\psfrag{8}{$8$}
\psfrag{9}{$9$}
\psfrag{empty}{$\bullet$}
\includegraphics[height=4cm]{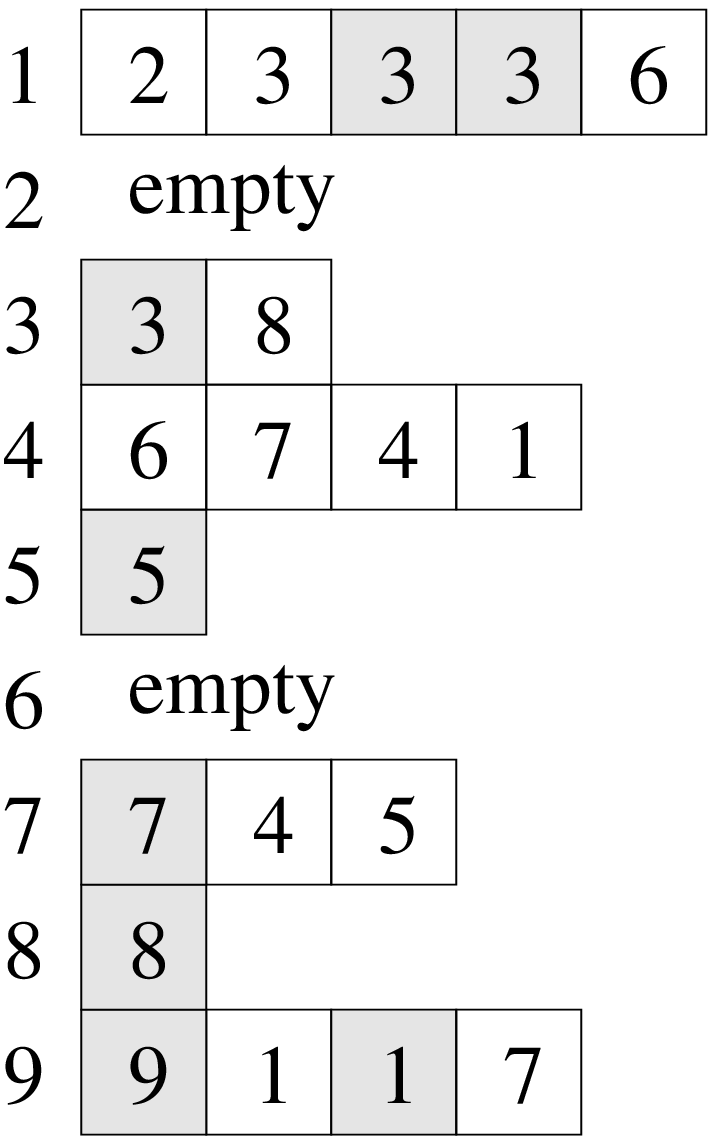}
\caption{Example of a $0$-admissible tableau. A column $0$ has been added and the $0$-critical cells are shaded.}
\end{figure}

Defining
\begin{equation}
d_T^{0}(\alpha) = \prod_{s \, \,  0{\text -{\rm critical}}} d_{\eta}(s)
\, ,
\end{equation}
the combinatorial formula for the non-symmetric Jack polynomials is given by
\begin{equation} \label{nonsymformula}
E_{\eta}(x; \alpha ) = \left(\frac{1}{\prod_{s \in \eta} d_{\eta}(s)} \right)
 \sum_{T \, \,  0{\text -{\rm admissible\ of\ shape\ } \eta}} d_T^{0}(\alpha) \, x^{{\rm ev}(T)} \, ,
\end{equation}
where ${\rm ev}(T)$, the evaluation of $T$, is given by the vector
$(|T|_1,\dots,|T|_N)$ with $|T|_i$ the number of $i$'s in
the $0$-admissible tableau $T$.

\subsection{Jack polynomials in superspace} \label{subsjack}

 Given a superpartition $\Lambda=
(\Lambda_1,\dots,\Lambda_m;\Lambda_{m+1},\dots,\Lambda_{N})$ define
$\tilde \Lambda$ to be the composition
\begin{equation}
\tilde \Lambda := (\Lambda_m,\dots,\Lambda_1,\Lambda_N,\dots,\Lambda_{m+1}) \, .
\end{equation}
It was established in \cite{DLM1} that the Jack polynomials
in superspace can be obtained from the
non-symmetric
Jack polynomials through the following relation:
\begin{equation} \label{jackinnonsym}
{J}_{\Lambda} = \frac{(-1)^{m(m-1)/2}}{f_{\Lambda^s}}
\sum_{w \in S_N} {\mathcal K}_{w} \, \theta_1 \cdots \theta_m \,
E_{\tilde \Lambda}(x;\alpha) \, ,
\end{equation}
where $f_{\Lambda^s}$ was defined in \eqref{equadeff}
and $\mathcal K_w$ was defined at the beginning of the introduction.
In this article, this
will serve as our definition of Jack polynomials in superspace.

Note that the composition $\tilde \Lambda$ is of a very special form.  Its
first $m$ rows (resp. last $N-m$ rows)
are strictly increasing (resp. weakly increasing).
Diagrammatically, it is made of two partitions (the first one of
which without repeated parts)
drawn in the French
notation (largest row in the bottom).
For instance if $\Lambda=(3,1,0;5,3,3,0,0)$, we have
$\tilde \Lambda=(0,1,3,0,0,3,3,5)$ whose diagram is given by
\begin{equation}
{\tableau[scY]
{\bl \bullet \\  \\ && \\ \bl \bullet \\ \bl \bullet \\  & &  \\
 & & \\
& & & &  \\ }}
\end{equation}
We will refer to the first $m$ rows (resp. last $N-m$ rows)
of $\tilde \Lambda$ as the fermionic (resp. non-fermionic)
portion of $\tilde \Lambda$.

\section{The main result}

Given a cell $s$ in $D[\Lambda]$, let
$a_{\La}(s)$ be the number of cells (including the possible circle
at the end of the row) to the right of $s$.  Let also
$\ell_{\La}(s)$ be the number of cells (not including the possible
circle at the bottom of the column) below $s$.  Finally, let
$\Lambda^{\circ}$ be the set of cells of $D[\La]$ that do not
appear at the same time in a row containing a circle {\it and} in a
column containing a circle.  The result we will prove in this article is the
following, which was conjectured in \cite{DLM2}.
\begin{proposition}  \label{propo}
The coefficient  $c_\La^{\mathrm{min}}(\alpha)$ of
$\tilde m_{\Lambda_{\mathrm{min}}}= (\ell_{n,m}!)
m_{\Lambda_{\mathrm{min}}}$ in the monomial expansion of $J_\La$ is given by
\begin{equation}
c_\La^{\mathrm{min}}(\alpha) = \frac{1}{\prod_{s \in
\Lambda^{\circ}} \Bigl( \alpha a_{\Lambda}(s) + \ell_{\Lambda}(s)+1
\Bigr)  } \, .
\end{equation}
\end{proposition}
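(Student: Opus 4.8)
The plan is to unfold the two combinatorial formulas at our disposal -- \eqref{jackinnonsym}, which writes $J_{\Lambda}$ in terms of the single non-symmetric Jack polynomial $E_{\tilde\Lambda}$, and \eqref{nonsymformula}, which writes $E_{\tilde\Lambda}$ as a weighted sum over $0$-admissible tableaux -- so as to express $c_{\Lambda}^{\mathrm{min}}(\alpha)$ as an explicit signed weighted sum over a rigidly constrained family of tableaux of shape $\tilde\Lambda$, and then to evaluate that sum by Identity~\ref{iden2} (of which Identity~\ref{iden20} is the case $\gamma=0^{m-1}$). First I would reduce to a single monomial coefficient: since distinct monomial superfunctions $m_{\Omega}$ have disjoint monomial supports, $c_{\Lambda}^{\mathrm{min}}(\alpha)$ is recovered, up to the explicit elementary factors $\ell_{n,m}!/f_{\Lambda_{\mathrm{min}}^{s}}$ and a count of the permutations stabilizing a chosen monomial, as the coefficient in $J_{\Lambda}$ of one well-chosen monomial $\theta_{1}\cdots\theta_{m}\,x^{\mu}$ with $x^{\mu}$ in the orbit of $x^{\Lambda_{\mathrm{min}}}$ (choosing $\mu$ with the $\theta$'s already on $1,\dots,m$ in order keeps the signs under control). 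Substituting \eqref{jackinnonsym} and \eqref{nonsymformula}, this coefficient becomes $\bigl(\prod_{s\in\tilde\Lambda}d_{\tilde\Lambda}(s)\bigr)^{-1}$ times a signed sum of $d^{0}_{T}(\alpha)$ over those $0$-admissible $T$ of shape $\tilde\Lambda$ whose evaluation $\mathrm{ev}(T)$ equals $\mu$ after a suitable permutation, the sign being the one produced by reordering the $\theta$'s.

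The structural heart of the argument is the next step. Because $\Lambda_{\mathrm{min}}$ is the \emph{minimal} superpartition of degree $n$ and fermionic degree $m$, its monomial is the "most spread out'' one, and $0$-admissibility is extremely restrictive for tableaux with the corresponding evaluation: the multiset of entries in each column is forced, and the admissibility conditions (1)--(3) then pin down the arrangement almost completely. I expect one can show that such a tableau $T$ is encoded precisely by the partitions $\lambda^{(1)},\dots,\lambda^{(m)}$ read off from the $m$ fermionic rows of $\tilde\Lambda$ -- forced to be non-intersecting in the sense of Identity~\ref{iden20} -- together with the data of the non-fermionic portion of $\tilde\Lambda$ (which supplies the parameter $\gamma$ of the general Identity~\ref{iden2}); that the sign of $T$ is exactly $\mathrm{sgn}([\lambda^{(1)}_{1},\dots,\lambda^{(m)}_{1}])$; and that the $0$-critical cells of $T$ are exactly the cells indexed by the critical pairs $(i,j)$ of the identity. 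Making this dictionary precise -- in particular characterizing the admissible fillings of each column and tracking the sign through the symmetrization -- is where I expect the main combinatorial bookkeeping to lie on the tableaux side.

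Granting that dictionary, $d^{0}_{T}(\alpha)=\prod_{(i,j)\ \mathrm{critical}}d_{\tilde\Lambda}(s_{i,j})$, and a direct computation of the arm-, leg- and $\alpha$-hooklengths for the special shape $\tilde\Lambda$ (two partitions in French notation, the first with distinct parts) should turn each factor $d_{\tilde\Lambda}(s_{i,j})$ into a linear expression $\alpha a_{\lambda^{(i)}_{j}}+b_{j-1}$ for indeterminates $a_{1},\dots,a_{m}$ and $b_{1},\dots,b_{m-1}$ read off from $\tilde\Lambda$ (with the $b$'s absorbing the $\gamma$-dependence in the general case, and the $a_{i}$'s being $\alpha$ times integers coming from the fermionic arm-lengths). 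At this point the signed sum over our family of tableaux is literally the right-hand side of \eqref{idenequa} (or its $\gamma$-analog), so Identity~\ref{iden2} collapses it to the product $\prod_{1\le j<i\le m}(a_{i}+1-a_{j})$, which under the above substitution equals $\prod_{s}d_{\tilde\Lambda}(s)$ over the cells $s$ of $\tilde\Lambda$ corresponding to the cells of $D[\Lambda]$ lying simultaneously in a circled row and a circled column (up to a monomial in $\alpha$ and integer constants). These factors cancel the matching part of the denominator $\prod_{s\in\tilde\Lambda}d_{\tilde\Lambda}(s)$; a final hooklength identity -- matching the remaining composition hooklengths $d_{\tilde\Lambda}(s)$ against the super arm/leg lengths $a_{\Lambda}(s),\ell_{\Lambda}(s)$ of $D[\Lambda]$ for $s\in\Lambda^{\circ}$ -- together with the cancellation of all the remaining elementary factors ($\ell_{n,m}!$, $f_{\Lambda_{\mathrm{min}}^{s}}$, the permutation count, the stray power of $\alpha$) leaves exactly $1/\prod_{s\in\Lambda^{\circ}}\bigl(\alpha a_{\Lambda}(s)+\ell_{\Lambda}(s)+1\bigr)$.

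It remains to prove Identity~\ref{iden2} (hence Identity~\ref{iden20}), and this is where I expect most of the genuine difficulty to reside. I would recognize its right-hand side as a determinant: the product over critical pairs factors each summand into a product of "path weights'', and the non-intersecting condition on $(\lambda^{(1)},\dots,\lambda^{(m)})\in\mathcal V_{0}$ together with the constraint that no part exceed $m$ lets one model each $\lambda^{(j)}$ as a lattice path, so that $\mathcal V_{0}$ corresponds to non-intersecting families of such paths with prescribed endpoints. The Lindström--Gessel--Viennot lemma \cite{LGV} then rewrites the signed sum as $\det$ of an explicit matrix whose entries are (one-variable) generating functions of single weighted paths, i.e.\ elementary-symmetric-type polynomials in the $a_{\lambda^{(i)}_{j}}+b_{j-1}$; evaluating this determinant -- reducing it by row/column operations to a Vandermonde-like form -- should yield precisely $\prod_{1\le j<i\le m}(a_{i}+1-a_{j})$, manifestly independent of the $b$'s as required. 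The extraction of that determinant from the unsigned sum, and its evaluation, are the steps I would budget the most effort for.
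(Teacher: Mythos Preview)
Your overall architecture is the paper's: unfold \eqref{jackinnonsym} and \eqref{nonsymformula}, isolate the coefficient of a single monomial $\theta_1\cdots\theta_m x^{\Lambda_{\mathrm{min}}}$, rewrite the resulting signed tableau sum as the right-hand side of Identity~\ref{iden2}, collapse it via Lindstr\"om--Gessel--Viennot plus a determinant evaluation, and cancel the surviving hooklength product against $\prod_{s\in\tilde\Lambda}d_{\tilde\Lambda}(s)$ to leave $\prod_{s\in\Lambda^\circ}(\alpha a_\Lambda(s)+\ell_\Lambda(s)+1)^{-1}$. The hooklength bookkeeping you sketch is essentially Lemma~\ref{lemma2}, and your plan for Identity~\ref{iden2} is exactly the paper's.

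There is, however, a genuine missing step in your dictionary. The claim that for tableaux of evaluation $\Lambda_{\mathrm{min}}$ ``the multiset of entries in each column is forced'' by $0$-admissibility is false: nothing in conditions (1)--(3) prevents a letter $a\in\{1,\dots,m\}$ from appearing in some column $j'>j$ while being absent from column $j$, and such letters may even sit in the non-fermionic rows of $\tilde\Lambda$. These ``bad'' configurations do contribute to the signed sum, and your proposed bijection to $\mathcal V_\gamma$ does not see them. The paper removes them with a separate sign-reversing involution (Lemma~\ref{lemmagconf}): locate the smallest such gap column $j$ and the letter $a$ with smallest $|P|_a$ skipping it, take $b$ with $|P|_b=j-1$, and swap all $a$'s and $b$'s to the right of column $j$. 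Only the surviving ``good'' configurations---those in which each letter $i$ fills a contiguous initial run of columns, necessarily inside the fermionic staircase---biject with $\mathcal V_\gamma$. Without this involution the translation to Identity~\ref{iden2} does not go through.

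You also have the source of $\gamma$ backwards. The vector $\gamma\in\{0,1\}^{m-1}$ records the \emph{fermionic} shape: $\gamma_j=1$ iff the fermionic portion of $\tilde\Lambda$ has a part of size $j-1$, and the constraint $\lambda^{(i)}_{j+1}>\#\{k\le j:\gamma_k=1\}$ merely says that the good configuration stays inside the staircase $\Lambda^a$. The non-fermionic data $\Lambda^s$ enter only through the \emph{values} of the indeterminates, $a_i=\alpha\tilde\Lambda_i+v_{\tilde\Lambda_i}(\Lambda^s)+i$ and $b_j=\alpha(1-j)+{\bf l}'_{\tilde\Lambda}((m,j))-m-v_{j-1}(\Lambda^s)+1$, not through $\gamma$; in particular the $a_i$ are not ``$\alpha$ times integers'' and there is no stray power of $\alpha$ to cancel at the end.
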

For instance, in the case $\Lambda=(3,1,0;4,2,1)$,
filling every cell $s \in \La^{\circ}$ with the
corresponding value $\bigl( \alpha a_{\Lambda}(s) + \ell_{\Lambda}(s)+1
\bigr)$, we obtain
\begin{equation} {\small{\tableau[mcY]{{\mbox{\tiny
$3\alpha+5$}}&{\mbox{\tiny $2\alpha+3$}} &{\mbox{\tiny $\alpha+2$}}&
{\mbox{\tiny $1$}} \\& &{\mbox{\tiny
$\alpha+1$}}&\bl\gcercle\\{\mbox{\tiny $\alpha+3$}}&{\mbox{\tiny
$1$}}\\ & \bl\gcercle\\{\mbox{\tiny $1$}}\\  \bl \gcercle}} }
\end{equation}
We thus get in this case
\begin{equation}
c_\La^{\mathrm{min}}(\alpha)
=\frac{1}{(3\alpha+5)(2\alpha+3)(\alpha+2)(\alpha+1)(\alpha+3)} \, .
\end{equation}

\section{Derivation of the identity}

Combining \eqref{nonsymformula} and \eqref{jackinnonsym}, we have
\begin{equation} \label{equajack}
J_{\Lambda} = \frac{(-1)^{m(m-1)/2}}{f_{\Lambda^s}}  \left(\frac{1}{\prod_{s
      \in \tilde \Lambda} d_{\tilde \Lambda}(s)} \right)
\sum_{w \in S_N} {\mathcal K}_{w} \, \theta_1 \cdots \theta_m \,
\sum_{T \, \, 0{\text -{\rm admissible}}} d_T^{0}(\alpha) \, x^{{\rm ev}(T)} \, ,
\end{equation}
where the inner sum is over all $0$-admissible tableaux of shape $\tilde \Lambda$.

To prove Proposition~\ref{propo}, we will compute the
coefficient of
$\tilde m_{\Lambda_{\mathrm{min}}}$ in the R.H.S. of \eqref{equajack}
and show that it is as stated in the proposition.
This will be done in a series of steps that will culminate at the
end of the section with an identity on partitions.  The identity will
then be proven in the next section.

First, it is known \cite{DLM1}
that a given expansion coefficient $c_{\Lambda \Omega}(\alpha)$
in
\begin{equation}
J_{\Lambda} = \sum_\Omega c_{\Lambda \Omega}(\alpha) m_{\Omega}
\end{equation}
does not depend on the number of variables $N$ as long as $N \geq
\ell(\Omega)$.
Therefore, for simplicity we can set $N= \ell_{n,m}+m$ (which corresponds to
 $\ell(\Lambda_{\mathrm{min}})$).  Also, by symmetry, it
is obvious that to compute the coefficient of $m_{\Lambda_{\mathrm{min}}}$ it
suffices to compute the coefficient of
$\theta_1 \cdots \theta_m x^{\Lambda_{\mathrm{min}}}$ in $J_{\Lambda}$.

In the remainder of this article, given a permutation $w$,
${\rm sgn}(w)$ will stand for the sign of the permutation $w$.  Will
will use $S_m$ and $S_{N-m}$ to stand for the subgroups of $S_N$
made out of elements permuting
$\{1,\dots,m\}$ and $\{ m+1,\dots,N\}$ respectively.
\begin{lemma} \label{lemmaev}
We have that $T$ makes a non-zero
contribution to the coefficient of $\theta_1 \cdots \theta_m
x^{\Lambda_{\mathrm{min}}}$ in the R.H.S. of \eqref{equajack}
iff ${\rm ev}(T)=(|T|_1,\dots,|T|_m,1,\dots,1)$ with
$[|T|_1+1,\dots,|T|_m+1]$ a permutation in $S_m$.
Furthermore, when $T$
makes a non-zero contribution we have
$\mathcal K_w \theta_1 \cdots \theta_m x^{{\rm ev}(T)} = \pm \, \theta_1 \cdots \theta_m
x^{\Lambda_{\mathrm{min}}}$,
where $w$ is of the form $w = w_1 \times w_2 \in S_m \times S_{N-m}$ with
$w_1=[m-|T|_1,\dots,m-|T|_m]$, in which case the sign $\pm$ is given
by ${\rm sgn}(w_1)$.
\end{lemma}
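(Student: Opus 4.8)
The plan is to track the fermionic monomial $\theta_1\cdots\theta_m$ and the bosonic monomial $x^{\Lambda_{\mathrm{min}}}$ separately through the action of $\mathcal K_w$, using the fact that the weights never vanish: each $d_{\tilde\Lambda}(s)=\alpha({\bf a}_{\tilde\Lambda}(s)+1)+{\bf l}_{\tilde\Lambda}(s)+1$ is a polynomial in $\alpha$ with nonnegative coefficients and constant term $\geq 1$, so $d_T^{0}(\alpha)\neq 0$ for every $0$-admissible $T$. Hence a tableau $T$ contributes to the coefficient of $\theta_1\cdots\theta_m\,x^{\Lambda_{\mathrm{min}}}$ exactly when there is some $w\in S_N$ with $\mathcal K_w\,\theta_1\cdots\theta_m\,x^{{\rm ev}(T)}=\pm\,\theta_1\cdots\theta_m\,x^{\Lambda_{\mathrm{min}}}$, provided we also check (at the end) that for a fixed $T$ all such $w$ enter with the \emph{same} sign, so that summing over $w$ produces no cancellation.

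First I would analyze the fermionic factor. Since $\mathcal K_w\,\theta_1\cdots\theta_m=\theta_{w(1)}\cdots\theta_{w(m)}$, this can be a scalar multiple of $\theta_1\cdots\theta_m$ only if $\{w(1),\dots,w(m)\}=\{1,\dots,m\}$ as sets; equivalently $w$ stabilizes the block $\{1,\dots,m\}$ and therefore factors as $w=w_1\times w_2\in S_m\times S_{N-m}$. Reordering $\theta_{w_1(1)}\cdots\theta_{w_1(m)}$ back into $\theta_1\cdots\theta_m$ then contributes precisely the sign ${\rm sgn}(w_1)$. This already pins down the block structure of $w$ and identifies the sign.

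Next, for such a $w=w_1\times w_2$ I would compare bosonic exponents: $\mathcal K_w\,x^{{\rm ev}(T)}=\prod_j x_j^{|T|_{w^{-1}(j)}}$ must equal $x^{\Lambda_{\mathrm{min}}}=x_1^{m-1}x_2^{m-2}\cdots x_m^{0}\,x_{m+1}\cdots x_N$. On the non-fermionic block $w_2$ permutes $\{m+1,\dots,N\}$ among themselves, where all exponents of $\Lambda_{\mathrm{min}}$ equal $1$, so this forces $|T|_k=1$ for every $k>m$ and leaves $w_2$ arbitrary. On the fermionic block it forces $|T|_{w_1^{-1}(j)}=m-j$ for $j=1,\dots,m$, which is solvable if and only if $(|T|_1,\dots,|T|_m)$ is a permutation of $(0,1,\dots,m-1)$, i.e. $[|T|_1+1,\dots,|T|_m+1]\in S_m$, in which case $w_1$ is uniquely determined as $[m-|T|_1,\dots,m-|T|_m]$. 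As a sanity check, the number of cells of $\tilde\Lambda$ equals $|\Lambda|=n=m(m-1)/2+\ell_{n,m}$, consistent with $\sum_{i\leq m}|T|_i+(N-m)$ under these constraints.

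Finally, for the converse together with the no-cancellation point: if ${\rm ev}(T)$ has the stated form, then $w_1=[m-|T|_1,\dots,m-|T|_m]$ paired with any $w_2\in S_{N-m}$ gives $(N-m)!$ permutations $w$, each producing the same term ${\rm sgn}(w_1)\,d_T^{0}(\alpha)$ (up to the overall normalization factor) in the coefficient of $\theta_1\cdots\theta_m\,x^{\Lambda_{\mathrm{min}}}$; since these signs agree there is no cancellation and $T$ genuinely contributes. I expect the only delicate points to be the sign bookkeeping for the anticommuting $\theta_i$ and keeping straight the direction in which $S_N$ acts on exponents; the rest is a direct comparison of monomials.
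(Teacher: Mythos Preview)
Your proof is correct and follows essentially the same route as the paper's own argument: restrict $w$ to $S_m\times S_{N-m}$ via the fermionic factor, match bosonic exponents to force $\{|T|_1,\dots,|T|_m\}=\{0,\dots,m-1\}$ and $w_1=[m-|T|_1,\dots,m-|T|_m]$, and read off the sign from anticommutation. You are simply more careful than the paper (which dismisses the first part as ``obvious'') in noting that $d_T^{0}(\alpha)\neq 0$ and that all $(N-m)!$ choices of $w_2$ contribute with the same sign, so there is no cancellation.
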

\begin{proof}  The first part of the lemma is obvious given that we must
have $\{|T|_1,\dots,|T|_m  \}=\{0,1,\dots,m-1 \}$ for $T$ to make a non-zero
contribution to the coefficient of $\theta_1 \cdots \theta_m
x^{\Lambda_{\mathrm{min}}}$.  The second part follows from the fact that the
permutation $w$ must send $i$ to $m-|T|_i$, for all $i=1,\dots,m$, in
order to have $\mathcal K_w \, x^{{\rm ev}(T)}=x^{\Lambda_{\mathrm{min}}}$.
The sign arises from the anticommutation relations that the $\theta_i$'s
obey.
\end{proof}

Given a tableau $T$, we denote by $T_{(m)}$ the subtableau made out of the
cells of $T$ that
are filled with letters from $\{1,\dots,m \}$.
We say that $P$ is a $\tilde \Lambda$-configuration if
there exists a $T$ that makes a non-zero
contribution to the coefficient of $\theta_1 \cdots \theta_m
x^{\Lambda_{\mathrm{min}}}$ in the R.H.S. of \eqref{equajack} such that
$T_{(m)}=P$. Given a $\tilde \Lambda$-configuration $P$,
we define $\mathcal S_P$ to be the set of $0$-admissible tableaux $T$
such that $T_{(m)}=P$.  We let also
\begin{equation}
d_{P}(\alpha) :=
\prod_{s \, \,  0{\text -{\rm critical}}}
d_{\tilde \Lambda}(s) \, ,
\end{equation}
where a cell $s \in P$ is $0$-critical if it obeys the conditions
(a) or (b) for a $0$-critical cell in a
$0$-admissible tableau. Furthermore, let
$\mathcal C_{\tilde \Lambda}$ be the set of $\tilde \Lambda$-configurations.
\begin{lemma} \label{lemmacoef}
Let $T \in \mathcal S_{P}$ for some $P \in \mathcal C_{\tilde \Lambda}$.
Then
\begin{equation}
d_T^0(\alpha) = d_{P}(\alpha) \prod_{i=N-\ell(\Lambda^s)+1}^N d_{\tilde
  \Lambda}((i,1)) \, .
\end{equation}
\end{lemma}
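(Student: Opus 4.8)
The plan is to compare, cell by cell, the set of $0$-critical cells of $T$ with that of $P=T_{(m)}$, and to show that the surplus is exactly the set of first-column cells lying in the nonempty rows of the non-fermionic portion of $\tilde\Lambda$. Since $P\in\mathcal C_{\tilde\Lambda}$, the tableau $T$ makes a nonzero contribution, so by Lemma~\ref{lemmaev} we have ${\rm ev}(T)=(|T|_1,\dots,|T|_m,1,\dots,1)$; in particular \emph{each letter in $\{m+1,\dots,N\}$ occurs exactly once in $T$}. Recall also that the non-fermionic portion of $\tilde\Lambda$ occupies rows $m+1,\dots,N$ with weakly increasing sizes $\Lambda_N,\dots,\Lambda_{m+1}$, so its nonempty rows are precisely $N-\ell(\Lambda^s)+1,\dots,N$ (the zero parts of $\Lambda^s$ filling rows $m+1,\dots,N-\ell(\Lambda^s)$), and a one-line computation, using $N-m=\ell_{n,m}=|\Lambda^a|+|\Lambda^s|-m(m-1)/2\geq|\Lambda^s|\geq\ell(\Lambda^s)$, gives $N-\ell(\Lambda^s)\geq m$.

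The first step I would carry out is to pin down column $1$ of $T$ in the non-fermionic part. Process its cells from the bottom row upward: by property~(3) the entry of $(i,1)$ is $\geq i$, and by property~(1) the entries in column $1$ are pairwise distinct, so a downward induction starting from $(N,1)$ (whose entry must be $N$) forces $(i,1)$ to be filled with the letter $i$ for every $i=N-\ell(\Lambda^s)+1,\dots,N$. By condition~(b) each of these cells is $0$-critical, and since each such $i>m$ (here I use $N-\ell(\Lambda^s)\geq m$), none of them lies in $P=T_{(m)}$.

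Next I would show there are no other $0$-critical cells of $T$ outside $P$. Let $s=(i,j)$ be $0$-critical with $s\notin P$, so the entry $c$ of $s$ satisfies $c>m$. If $s$ is critical by~(b) then $c=i>m$, so $(i,1)$ sits in a nonempty non-fermionic row, i.e.\ $i\in\{N-\ell(\Lambda^s)+1,\dots,N\}$; these are exactly the cells of the previous step. If $s$ were critical by~(a), then $(i,j-1)$ and $(i,j)$ would both carry the letter $c>m$, contradicting that $c$ occurs only once in $T$. Hence the $0$-critical cells of $T$ lying outside $P$ are precisely the cells $(i,1)$ for $i=N-\ell(\Lambda^s)+1,\dots,N$. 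It then remains to check that a cell $s\in P$ is $0$-critical \emph{in $T$} iff it is $0$-critical \emph{in $P$}: condition~(b) is intrinsic to the cell, hence unaffected; for condition~(a), if $(i,j)\in P$ and $(i,j-1)$ carries the same letter as $(i,j)$ then that letter is $\leq m$, so $(i,j-1)\in P$ too, and conversely any witness for~(a) inside $P$ is a fortiori a witness inside $T$ (note $P$ need not be left-justified, but this causes no trouble since for~(a) only the single cell $(i,j-1)$ is involved). Combining the three steps, the $0$-critical cells of $T$ form the disjoint union of the $0$-critical cells of $P$ and the cells $(i,1)$ with $i=N-\ell(\Lambda^s)+1,\dots,N$, so
\[
d_T^0(\alpha)=d_{P}(\alpha)\,\prod_{i=N-\ell(\Lambda^s)+1}^{N} d_{\tilde \Lambda}\bigl((i,1)\bigr).
\]
The only genuinely delicate point is the forcing of column $1$ in the first step: one must order the induction from the bottom up and invoke both properties~(1) and~(3), together with the inequality $N-\ell(\Lambda^s)\geq m$ so that only entries $\geq m+1$ ever enter the argument; the remaining steps are routine bookkeeping with the definitions of $0$-critical cells.
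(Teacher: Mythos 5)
Your proof is correct and follows essentially the same route as the paper's: force the first-column entries of the nonempty non-fermionic rows (the paper uses the single-occurrence of each letter there, you use column-distinctness, which is even cleaner), observe these are exactly the $0$-critical cells outside $P$, and check that criticality inside $P$ is intrinsic. The one caveat is your opening inference --- $P\in\mathcal C_{\tilde\Lambda}$ only guarantees that \emph{some} tableau with $T_{(m)}=P$ contributes, not the given $T$, and the ``each letter in $\{m+1,\dots,N\}$ occurs exactly once'' property genuinely requires restricting $\mathcal S_P$ to contributing tableaux --- but the paper's own proof assumes precisely the same fact without further justification, so you are on equal footing there.
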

\begin{proof}
There is exactly one occurrence of the letter $i$
in $T$ for $i=m+1,\dots,N$ (recall that $N=\ell_{n,m}+m$).
By condition (3) of the definition of $0$-admissible tableaux, we
must have a letter $N$ in position $(N,1)$.  Then cell $(N-1,1)$
must be filled with a letter
$N-1$, since letter $N$ has already been used to fill cell $(N,1)$.
Applying this reasoning again
and again we get that position $(i,1)$, for $i=N-\ell(\Lambda^s)+1,\dots,N$,
is filled with a letter $i$.  This implies that all these cells are
$0$-critical and contribute to a factor $\prod_{i=N-\ell(\Lambda^s)+1}^N d_{\tilde
  \Lambda}((i,1))$.
>From the definition of $d_{P}(\alpha)$,
the contribution of the letters $1,\dots,m$ in $d_T^0(\alpha)$
will be
$d_{P}(\alpha)$.
Finally, the remaining letters
$m+1,\dots,N-\ell(\Lambda^s)$ appear
exactly once and cannot occupy positions $(i,1)$ for
$i=m+1,\dots,N-\ell(\Lambda^s)$, since these cells do not belong to $\tilde \Lambda$.
Therefore none of these letters occupies a $0$-critical position in $T$
and thus each of them contributes a factor 1 in $d_T^0(\alpha)$.
\end{proof}
An easy consequence of the proof of the lemma is that the number of
$0$-admissible tableaux in $\mathcal S_{P}$
is equal to $(\ell_{n,m}-\ell(\Lambda^s))!$
for any
$\tilde \Lambda$-configuration $P$.
Using Lemmas~\ref{lemmaev} and \ref{lemmacoef}, and defining
${\rm sgn}(P)$ to be the sign of the permutation
$[m-|P|_1,\dots,m-|P|_m]$, we then get from \eqref{equajack} that
\begin{equation}
J_{\Lambda} \big |_{m_{\Lambda_{\mathrm{min}}}=}
= \, \frac{(-1)^{m(m-1)/2}}{f_{\Lambda^s}}  \left(\frac{
\prod_{i=N-\ell(\Lambda^s)+1}^N d_{\tilde
  \Lambda}((i,1))}{\prod_{s
      \in \tilde \Lambda} d_{\tilde \Lambda}(s)} \right)   (\ell_{n,m}-\ell(\Lambda^s))! \,
\ell_{n,m}!
\sum_{P \in \mathcal C_{\tilde \Lambda} } {\rm sgn}(P)
d_{P}(\alpha) \, ,
\end{equation}
where $\ell_{n,m}!$ accounts for the number of elements in $S_{N-m}$.
The coefficient  $c_\La^{\mathrm{min}}(\alpha)$ of
$\tilde m_{\Lambda_{\mathrm{min}}}= (\ell_{n,m}!)
m_{\Lambda_{\mathrm{min}}}$ in the monomial expansion of $J_\La$ is thus
\begin{equation} \label{equacmin}
c_\La^{\mathrm{min}} (\alpha)=
\frac{(-1)^{m(m-1)/2}}{f_{\Lambda^s}}  \left(\frac{
\prod_{i=N-\ell(\Lambda^s)+1}^N d_{\tilde
  \Lambda}((i,1))}{\prod_{s
      \in \tilde \Lambda} d_{\tilde \Lambda}(s)} \right)
 (\ell_{n,m}-\ell(\Lambda^s))!
\sum_{P \in \mathcal C_{\tilde \Lambda} } {\rm sgn}(P)
d_{P}(\alpha) \, .
\end{equation}
The next lemma will further simplify this equation.
\begin{lemma} \label{lemma2} We have
\begin{equation} \label{eqlemma}
\frac{ \left( \prod_{s
      \in \tilde \Lambda} d_{\tilde \Lambda}(s) \right)
 \left(\prod_{i\geq 1} m_i (\Lambda^s)!
 \right) }{ \prod_{s \in
\Lambda^{\circ}} \Bigl( \alpha a_{\Lambda}(s) + \ell_{\Lambda}(s)+1
\Bigr) }   = \left(\prod_{i=N-\ell(\Lambda^s)+1}^N d_{\tilde
  \Lambda}((i,1))\right)\left(\prod_{1\leq j < i \leq m} d_{\tilde
  \Lambda}((i,\tilde \Lambda_{j}+1))\right)\, .
\end{equation}
\end{lemma}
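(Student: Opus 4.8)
The plan is to establish \eqref{eqlemma} by matching, cell by cell, the linear-in-$\alpha$ factors on the two sides. The first move is to clear the obvious common factors. The $\ell(\Lambda^s)+\binom{m}{2}$ quantities appearing on the right-hand side are values of $d_{\tilde\Lambda}$ at the cells $(i,1)$ with $N-\ell(\Lambda^s)+1\le i\le N$ (these are the bottom $\ell(\Lambda^s)$ rows of $\tilde\Lambda$, carrying the nonzero parts of $\Lambda^s$, so $(i,1)\in\tilde\Lambda$) together with the cells $(i,\tilde\Lambda_j+1)$ with $1\le j<i\le m$; because $\tilde\Lambda_1<\dots<\tilde\Lambda_m$, one has $\tilde\Lambda_j+1\le\tilde\Lambda_i$ so the latter are genuine cells of $\tilde\Lambda$, and all $\ell(\Lambda^s)+\binom{m}{2}$ of these cells are pairwise distinct. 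Hence they cancel against the corresponding factors of $\prod_{s\in\tilde\Lambda}d_{\tilde\Lambda}(s)$, and, writing $\mathcal R$ for the set of the remaining cells of $\tilde\Lambda$ (so $|\mathcal R|=n-\ell(\Lambda^s)-\binom{m}{2}$), it suffices to prove
\[
\prod_{s\in\mathcal R}d_{\tilde\Lambda}(s)\,\prod_{i\ge1}m_i(\Lambda^s)!\;=\;\prod_{s\in\Lambda^{\circ}}\bigl(\alpha a_{\Lambda}(s)+\ell_{\Lambda}(s)+1\bigr).
\]

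Next I would account for the discrepancy in cardinalities. The cells of $D[\Lambda]$ lying simultaneously in a circled row and a circled column are exactly, inside the circled row of length $\Lambda_k$, the cells in columns $\Lambda_{k+1}+1<\dots<\Lambda_m+1$ (a circled column $\Lambda_l+1$ meets this row only when $\Lambda_l<\Lambda_k$, i.e., $l>k$), so there are $\sum_{k=1}^m(m-k)=\binom{m}{2}$ of them and $|\Lambda^{\circ}|=n-\binom{m}{2}$; thus $\Lambda^{\circ}$ has exactly $\ell(\Lambda^s)$ more cells than $\mathcal R$. Let $\mathcal E\subset\Lambda^{\circ}$ consist of the last cell of each of the $\ell(\Lambda^s)$ non-circled rows of $D[\Lambda]$. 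For $s\in\mathcal E$ one has $a_{\Lambda}(s)=0$ (no circle on that row), and grouping these cells by the common length $i$ of their rows, the leg-lengths of the $m_i(\Lambda^s)$ of them run through $0,1,\dots,m_i(\Lambda^s)-1$ — because below the lowest non-circled row of length $i$ every row of $D[\Lambda]$ is strictly shorter — so that $\prod_{s\in\mathcal E}\bigl(\alpha a_{\Lambda}(s)+\ell_{\Lambda}(s)+1\bigr)=\prod_{i\ge1}m_i(\Lambda^s)!$. It therefore remains to produce a bijection $\psi\colon\mathcal R\to\Lambda^{\circ}\setminus\mathcal E$ with $d_{\tilde\Lambda}(s)=\alpha a_{\Lambda}(\psi(s))+\ell_{\Lambda}(\psi(s))+1$.

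The bijection treats separately the non-fermionic and fermionic cells of $\mathcal R$. A non-fermionic cell $(i,j)\in\mathcal R$ (so $i>m$ and $2\le j\le\tilde\Lambda_i$) is sent to the cell in column $j-1$ of the non-circled row of $D[\Lambda]$ of length $\tilde\Lambda_i$ occupying, among the non-circled rows of that length, the same position from the top as row $i$ does among the non-fermionic rows of that length in $\tilde\Lambda$; a fermionic cell $(i,j)\in\mathcal R$ is sent, with the same column index $j$, to the cell in column $j$ of the circled row of $D[\Lambda]$ of length $\tilde\Lambda_i=\Lambda_{m+1-i}$. A short count shows the non-fermionic image is exactly the set of cells of $\Lambda^{\circ}$ in non-circled rows other than the last cell of each such row, the fermionic image is exactly the set of cells of $\Lambda^{\circ}$ in circled rows, and $\psi$ is a bijection onto $\Lambda^{\circ}\setminus\mathcal E$. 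The factor equality then splits into the arm identity ${\bf a}_{\tilde\Lambda}(s)+1=a_{\Lambda}(\psi(s))$, where the extra $1$ is absorbed by the end-of-row circle of $D[\Lambda]$ (which plays the role of the pentagonal cell of Figure~\ref{fig:alphahooklengthdefinition}), and the leg identity ${\bf l}_{\tilde\Lambda}(s)={\bf l}_{\tilde\Lambda}'(s)+{\bf l}_{\tilde\Lambda}''(s)=\ell_{\Lambda}(\psi(s))$, which one checks by sorting the rows lying below $\psi(s)$ in its column of $D[\Lambda]$ according to whether the corresponding row of $\tilde\Lambda$ is fermionic or not, and rewriting each piece as a count of parts of $\Lambda^a$ and of $\Lambda^s$ in a suitable interval.

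The main obstacle is this last leg computation, in particular the fact that ${\bf l}_{\tilde\Lambda}'$ is defined with the shifted bound $\tilde\Lambda_k+1$ rather than $\tilde\Lambda_k$. For $\psi(s)$ in a circled row this would seemingly create an off-by-one error whenever some part $\Lambda_l$ equals the relevant column index minus one; but such an $l$ would force that column to be the circled column $\Lambda_l+1$, which is excluded precisely because $\psi(s)$ lies in $\Lambda^{\circ}$ — so the shift is harmless. The other point requiring care is that ${\bf l}_{\tilde\Lambda}''$ only registers the rows below of the same length, which is exactly why $\psi$ must pair rows of equal length in an order-preserving fashion; once these two matters are settled, the leg identity (and with it the whole lemma) reduces to elementary counting of parts in intervals. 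One should also double-check the boundary situations in which $\Lambda_m=0$ or $\Lambda^s$ has no nonzero parts, but these cause no trouble since an empty row contributes nothing to either side.
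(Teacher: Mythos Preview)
Your proof is correct and follows essentially the same approach as the paper's: both arguments set up a weight-preserving correspondence between cells of $\tilde\Lambda$ and cells of $\Lambda^\circ$, split according to fermionic versus non-fermionic rows, with the same column shift by one on the non-fermionic side and the same observation that the potential off-by-one from the shifted bound in ${\bf l}'_{\tilde\Lambda}$ is killed precisely because circled columns are excluded from $\Lambda^\circ$. The only cosmetic difference is that you cancel the right-hand factors first and then run the bijection $\psi\colon\mathcal R\to\Lambda^\circ\setminus\mathcal E$, whereas the paper runs the matching from $\Lambda^\circ$ into $\tilde\Lambda$ and reads off which cells of $\tilde\Lambda$ survive; the content and the key verifications are identical.
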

\begin{figure}
\psfrag{a}{${\tilde a}$}
\psfrag{b}{${\tilde b}$}
\psfrag{c}{${\tilde c}$}
\psfrag{d}{${\tilde d}$}
\psfrag{e}{$a^\circ$}
\psfrag{f}{$d^\circ$}
\psfrag{g}{$b^\circ$}
\psfrag{h}{$c^\circ$}
\includegraphics[height=8cm]{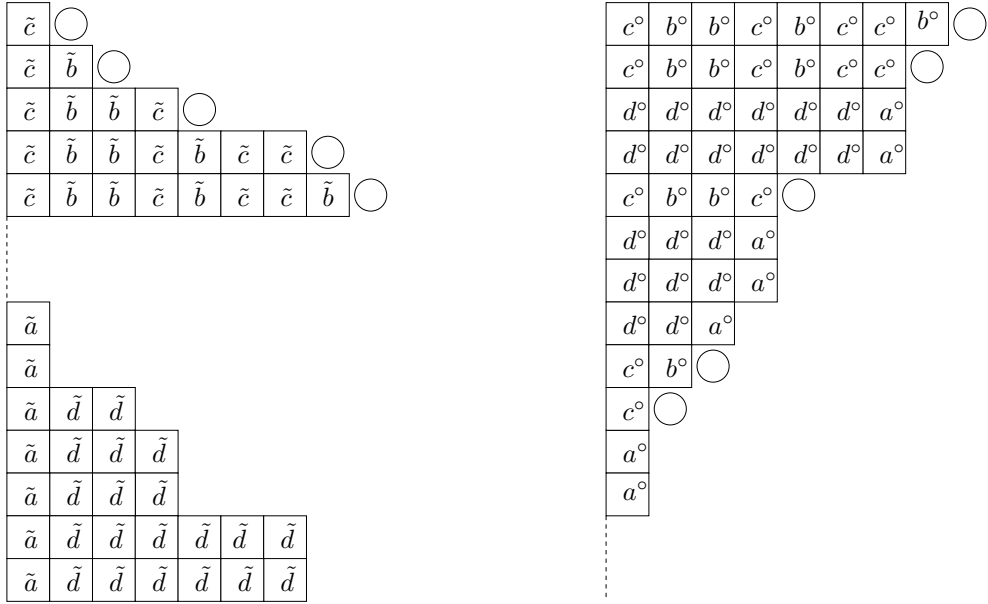}
\caption{\label{fig:simplicationlemma} There is a weight preserving
 bijection between cells of $\{{\tilde c},{\tilde d}\} \subset {\tilde
 \Lambda}$ and those of $\{ c^\circ,d^\circ \} \subset {\Lambda^\circ}
 \subset {\Lambda^*}$. Roughly speaking, this bijection corresponds to
 a sorting of rows according to their length and a cyclic shift of one
 cell to the left for non-fermionic rows. We denote by $W(X)$ the product of
 the appropriate weight of the cells in $X$. The bijection implies
 $W(\{{\tilde c},{\tilde d}\}) = W(\{c^\circ,d^\circ \})$.
This leads to
 $$ \frac{W({\tilde \Lambda})}{W(\{{\tilde a}\})W(\{{\tilde b}\})} =
 W(\{{\tilde c},{\tilde d}\}) = W(\{c^\circ,d^\circ\}) =
 \frac{W(\Lambda^{\circ})}{W(\{a^\circ\})} \, .$$ }
\end{figure}
\begin{proof}  The proof will proceed by
cancellation of certain terms in the L.H.S. of the equation to
obtain the R.H.S. Figure~\ref{fig:simplicationlemma} illustrates the
general idea of the proof.

Suppose $s=(i,j)\in \Lambda^{\circ}$ belongs to a fermionic row of $D[\Lambda]$
(one that ends with a circle).  Then row $i$ of $D[\Lambda]$ corresponds to
a row $k \in \{1,\dots,m\}$ of $\tilde \Lambda$.  We have then
\begin{equation}\label{equal1}
 \alpha \, a_{\Lambda}((i,j)) + \ell_{\Lambda}((i,j))+1 = \alpha(
{\bf a}_{\tilde \Lambda}((k,j))+1) +   {\bf l}_{\tilde \Lambda}((k,j))+1  = d_{\tilde
  \Lambda}((k,j)) \, .
\end{equation}
In this case  $a_{\Lambda}((i,j))= {\bf a}_{\tilde \Lambda}((k,j))+1$ since both rows
are of the same length and row $i$ of $D[\Lambda]$
has a circle (which accounts for the plus one).
We also have that ${\ell}_{\Lambda}((i,j))={\bf l}_{\tilde \Lambda}((k,j))$.
This is because
${\bf l}_{\tilde \Lambda}''((k,j))$ (resp. ${\bf l}_{\tilde \Lambda}'((k,j))$)
accounts  for the
non-fermionic (resp. fermionic) rows that contribute to $\ell_{\Lambda}((i,j))$.
The only way ${\bf l}_{\tilde \Lambda}'$ would not
correspond to
the number of fermionic rows contributing to $\ell_{\Lambda}((i,j))$ is
if some row above row $k$ in the diagram of $\tilde \Lambda$
was of length $j-1$ (in which case it would count
one too many row).  But this is not
possible since this would imply that there is a circle in column $j$ of
$D[\Lambda]$ and thus that $s \not \in \Lambda^{\circ}$.  Therefore
\eqref{equal1} follows.  Note that the cells that are not canceled in the
first $m$ rows of $\tilde \Lambda$ are exactly the cells $(i,\tilde
\Lambda_{j}+1)$, for $1\leq j < i \leq m$, appearing in the R.H.S. of \eqref{eqlemma}.

Suppose $(i,j)\in \Lambda^{\circ}$ does not belong to a fermionic row of
$D[\Lambda]$ and does not lie at the end of its row.  Then row $i$ of $D[\Lambda]$ corresponds to
a row $k \in \{N-\ell(\Lambda^s)+1,\dots,N\}$ of $\tilde \Lambda$.  In this
correspondence, if there are $p$ rows of the same length as row $i$  that do
not end with a circle in $D[\Lambda]$ and row
$i$ is the $r$-th one of them starting from the top, then we choose $k$ to be
also the $r$-th one (also starting from the top) of that length in
the fermionic portion of
$\tilde \Lambda$.
We have then
\begin{equation}\label{equual2}
 \alpha \, a_{\Lambda}((i,j)) + \ell_{\Lambda}((i,j))+1 = \alpha(
{\bf a}_{\tilde \Lambda}((k,j+1))+1) +   {\bf l}_{\tilde \Lambda}((k,j+1))+1  = d_{\tilde
  \Lambda}((k,j+1)) \, .
\end{equation}
It is easy to see that  $a_{\Lambda}((i,j))= {\bf a}_{\tilde \Lambda}((k,j+1))+1$ since both rows
are of the same length and row $i$ of $D[\Lambda]$ is not fermionic.  We now
need to see that ${\ell}_{\Lambda}((i,j))={\bf l}_{\tilde \Lambda}((k,j+1))$.
First,
${\bf l}_{\tilde \Lambda}''((k,j+1))$  accounts for all the rows below row $i$
of $D[\Lambda]$ of the same length as row $i$ and which contribute to
$\ell_{\Lambda}((i,j))$.  Then
${\bf l}_{\tilde \Lambda}'((k,j+1))$
accounts for all the rows below row $i$
of $D[\Lambda]$ smaller than row $i$ that contribute to
$\ell_{\Lambda}((i,j))$.

The cells in the fermionic portion of
$\tilde \Lambda$ that
are not canceled are those that lie in the first column and which correspond
to the cells $(i,1)$, for $i=N-\ell(\Lambda^s)+1,\dots,N$,
appearing in the R.H.S. of \eqref{eqlemma}.  And finally, the cells of
$\Lambda^{\circ}$ that are not canceled are those lying at the end of a
non-fermionic row.  It is easy to see that their contribution is
$\prod_{i\geq 1} m_i (\Lambda^s)!$.
\end{proof}
Using the previous lemma, equation \eqref{equacmin}, and the fact that
\begin{equation}
f_{\Lambda^s} =  (\ell_{n,m}-\ell(\Lambda^s))! \prod_{i\geq 1} m_i
(\Lambda^s)! \, ,
\end{equation}
we have
\begin{equation} \label{equacmin2}
c_\La^{\mathrm{min}} (\alpha)
 \prod_{s \in
\Lambda^{\circ}} \Bigl( \alpha a_{\Lambda}(s) + \ell_{\Lambda}(s)+1
\Bigr)  =
\frac{(-1)^{m(m-1)/2}} {\prod_{1\leq j < i \leq m} d_{\tilde
  \Lambda}((i,\tilde \Lambda_{j}+1))}
\sum_{P \in \mathcal C_{\tilde \Lambda}} {\rm sgn}(P)
d_{P}(\alpha) \, .
\end{equation}
We will now see that it is not necessary to sum over all $P \in \mathcal C_{\tilde \Lambda}$.
Let $\mathcal G_{\tilde \Lambda}$ be the set of all
$\tilde \Lambda$-configurations $P$ such that
for every $i=1,\dots,m$ there is a letter $i$ in column
$j$ of $P$ for $j=1,\dots,|P|_i$.
We will refer to $\mathcal G_{\tilde \Lambda}$ as the set
of good $\tilde \Lambda$-configurations.
\begin{figure}
\psfrag{2}{$2$}
\psfrag{3}{$3$}
\psfrag{b}{$b$}
\psfrag{5}{$5$}
\psfrag{6}{$6$}
\psfrag{a}{$a$}
\psfrag{8}{$8$}
  \includegraphics[height=5cm]{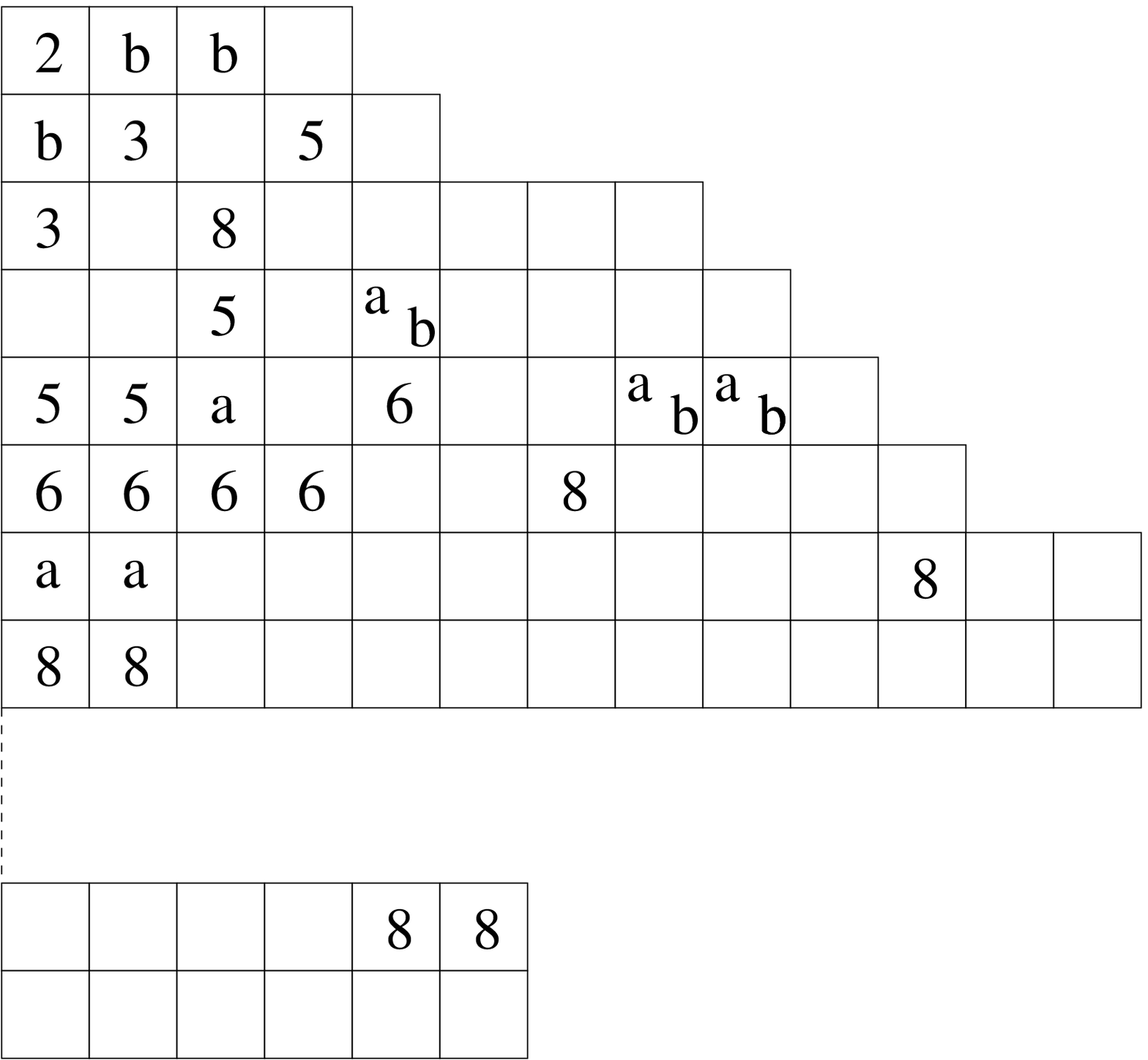}
\caption{\label{fig:involutionemptycolumns} Here are two bad
$\tilde \Lambda$-configurations mapped onto each others by the involution.
Empty cells implicitly contain a label greater than
  $m$ (set equal to 8 in the example).
In cells with two labels, the labels in the upper left (resp. lower right)
corner correspond to the labels of $P$ (resp. $P'$).
In the example, we have $a=7$, $j=4$, $b=4$. Observe that we can have
  labels not larger than $m$ in the non-fermionic portion of a
$\tilde \Lambda$-configuration.
For instance the $8$ in column $5$
is possible only because there is no $8$ in column $4$.}
\end{figure}
\begin{lemma} \label{lemmagconf}
   We have
\begin{equation}
\sum_{ P \in \mathcal C_{\tilde
    \Lambda} } {\rm sgn}(P) \,
d_{P}(\alpha) =\sum_{P \in \mathcal G_{\tilde
    \Lambda}}
{\rm sgn}(P) \,
d_{P}(\alpha)
\end{equation}
\end{lemma}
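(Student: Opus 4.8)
The plan is to prove the identity by constructing a sign-reversing, weight-preserving involution $\iota$ on the set $\mathcal C_{\tilde\Lambda}\setminus\mathcal G_{\tilde\Lambda}$ of \emph{bad} $\tilde\Lambda$-configurations: an involution with no fixed points such that $d_{\iota(P)}(\alpha)=d_P(\alpha)$ while $\mathrm{sgn}(\iota(P))=-\mathrm{sgn}(P)$. Granting such an $\iota$, the contributions of the bad configurations cancel in pairs on the left-hand side and only the good ones survive, which is exactly the assertion. Note first that, by condition~(1), a letter $i$ occupies exactly $|P|_i$ distinct columns of $P$; hence $P\in\mathcal G_{\tilde\Lambda}$ precisely when for each $i$ these columns are $1,2,\dots,|P|_i$, and $P$ is bad precisely when some letter $c$ occurs in a column $j>|P|_c$ and consequently is absent from some column $\leq|P|_c$.

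To define $\iota$ on a bad $P$, I would single out a canonical defect — say the rightmost column containing a letter sitting further right than it should, together with a distinguished such letter $c$ in it and the letter $c'$ missing from the column that $c$ fails to occupy — and then perform the minimal relabelling that interchanges the two multiplicities $|P|_c$ and $|P|_{c'}$: relabel an appropriate set of cells carrying the letter of larger multiplicity by the letter of smaller one, propagating the correction through any further cells forced by conditions~(2) and~(3), exactly as in the two examples of Figure~\ref{fig:involutionemptycolumns} (the ``two-label'' cells there are precisely those whose entry changes). The resulting configuration has the same evaluation as $P$ except that two of its entries are transposed, so $[m-|P|_1,\dots,m-|P|_m]$ is altered by a transposition and $\mathrm{sgn}$ flips; and one checks $\iota\circ\iota=\mathrm{id}$ by verifying that the selection rule applied to $\iota(P)$ re-chooses the same column and the same pair of letters, so that the defect has merely moved rather than disappeared, keeping $\iota(P)$ bad.

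The heart of the matter — and the step I expect to be the main obstacle — is to check that $\iota$ is \emph{well defined} and \emph{weight preserving}. Well-definedness requires that the relabelling always yields a tableau still satisfying conditions~(1)--(3); since the letters being moved are changed, this forces a careful case analysis that exploits precisely the asymmetry in conditions~(2) and~(3) (the distinguished behaviour of letter $i$ in row $i$ and in column $1$), and it is here that one must be certain a single well-chosen move, rather than an uncontrolled chain, does the job. Weight preservation then amounts to showing that the multiset of $0$-critical cells is unchanged: because $d_{\tilde\Lambda}(s)$ depends only on the position $s$ (the shape $\tilde\Lambda$ being fixed), it suffices to show that a cell has a horizontal neighbour in $T^0$ carrying an equal label before the move if and only if it does after, and this has to be tracked through every cell whose label changes, including those switching between a value $\leq m$ and a value $>m$. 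Carrying out this bookkeeping is the delicate part; once it is done, the pairwise cancellation of bad terms yields the lemma.
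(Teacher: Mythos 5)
Your overall strategy---pair off the bad configurations by a sign-reversing, weight-preserving involution---is exactly the paper's, but what you have written is a plan rather than a proof: the involution is never actually defined, and the two points you yourself flag as ``the main obstacle'' (well-definedness with respect to conditions (1)--(3) and invariance of the set of $0$-critical cells) are precisely the substance of the lemma, so deferring them leaves a genuine gap. Moreover, the tentative choices you do make push in a direction where the argument gets harder, not easier: you propose to locate the defect in the \emph{rightmost} offending column and then to ``propagate the correction through any further cells forced by conditions (2) and (3)''. With the rightmost column there is no reason the occupied columns of the relevant letters form prefixes, no canonical partner letter confined to the left of the defect, and hence no single clean relabelling---you are led straight into the uncontrolled chain you were worried about.

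The construction that works takes $j$ \emph{minimal} such that some letter occurs in a column $>j$ but not in column $j$. Minimality forces every letter reaching column $j$ or beyond to occupy all of columns $1,\dots,j-1$; since $(|P|_1,\dots,|P|_m)$ is a permutation of $(0,\dots,m-1)$, there is a unique letter $b$ with $|P|_b=j-1$, and its occurrences are exactly columns $1,\dots,j-1$. Among the defective letters choose $a$ with $|P|_a$ minimal, and replace the $a$'s strictly to the right of column $j$ by $b$'s. This swaps $|P|_a$ and $|P|_b$, hence flips the sign; it preserves conditions (1)--(3) with no propagation, because those conditions only constrain equal letters and the occurrences of $b$ beyond column $j$ in the new configuration are exactly the occurrences of $a$ beyond column $j$ in the old one, while neither letter occurs in column $j$; and it leaves the set of $0$-critical cells unchanged for the same reason (no horizontal adjacency across column $j$ is created or destroyed, and $b$ previously occurred only in columns $<j$), so $d_P(\alpha)$, which depends only on the positions of the critical cells, is preserved. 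Finally, the minimal defective column and the minimal multiplicity among defective letters are stable under the move, which is what makes it an involution. Without this (or an equivalent) explicit construction together with these checks, your argument does not yet establish the lemma.
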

\begin{proof}  The idea is to construct a sign-reversing involution among the
$\tilde \Lambda$-configurations that do not belong to
$\mathcal G_{\tilde \Lambda}$, which
we will call bad $\tilde
\Lambda$-configurations.   Figure~\ref{fig:involutionemptycolumns}
illustrates the involution that follows.
 Let $P$ be a bad
$\tilde \Lambda$-configuration.
Let $j$ be the smallest integer
such that there exists a letter $a$ that occurs in some column $j'>j$ of $P$
but does not occur in column $j$ of
$P$.
If there are many such $a$'s, pick the one such that
$|P|_a$ is the smallest.
Let $b$ be such that $|P|_b=j-1$.  By definition the $b$'s in
$P$ occur exactly in the first $j-1$ columns.
Therefore $P'$ obtained from $P$ by replacing
the $a$'s that
occur to the right of column $j$ with $b$'s is also
a bad $\tilde \Lambda$-configuration.  We obviously have that ${\rm
  sign}(P')=
- {\rm sgn}(P)$ and $d_{P'}(\alpha)=d_{P}(\alpha)$.
This operation is obviously an involution.
\end{proof}

Now, suppose that  $P$ is a good $\tilde \Lambda$-configuration,
and fix an $i \in \{1,\dots,m \}$.
By the definition of a $0$-admissible tableau (recall that $P=T_{(m)}$ for
some $0$-admissible tableau $T$),
the letter $i$ in the first column of $P$ (if it exists)
is in a row $i_1 \leq i \leq m$.
Again by the the definition of a $0$-admissible tableau,
the letter $i$ in the second column of $P$ (if it exists)
is in a row $i_2 \leq i_i \leq i \leq m$.  Using this argument again and
again, we get that the letters $i$ in column $j=1,\dots,|P|_i$ lie in a
row $i_j$ such that $m \geq i \geq i_1 \geq i_2 \geq \cdots \geq i_{|P|_i}$.
This gives the following lemma.
\begin{lemma} \label{lemmagconfsimple}
 $P$ is a good $\tilde \Lambda$-configuration
iff $[|P|_1+1,\dots,|P|_m+1]$ is a permutation of $S_m$ and the
letters $i$ in column $j=1,\dots,|P|_i$ lie in a
row $i_j$ such that $m \geq i \geq i_1 \geq i_2 \geq \cdots \geq i_{|P|_i}$.
In particular, the cells in a good
$\tilde \Lambda$-configuration
all lie in the first $m$ rows of $\tilde \Lambda$, and thus
the concept of good
$\tilde \Lambda$-configuration only depends on the
fermionic portion of $\tilde \Lambda$.
\end{lemma}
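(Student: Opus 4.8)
The plan is to prove the two implications of the equivalence separately and then to deduce the ``in particular'' clause from them. The forward implication is essentially the computation carried out just before the statement. Suppose $P$ is a good $\tilde\Lambda$-configuration. Since $P\in\mathcal G_{\tilde\Lambda}\subseteq\mathcal C_{\tilde\Lambda}$, we have $P=T_{(m)}$ for some $0$-admissible tableau $T$ of shape $\tilde\Lambda$ contributing to the coefficient of $\theta_1\cdots\theta_m x^{\Lambda_{\mathrm{min}}}$, so Lemma~\ref{lemmaev} gives at once that $[|P|_1+1,\dots,|P|_m+1]$ is a permutation of $S_m$. Now fix $i\in\{1,\dots,m\}$; by goodness the letter $i$ occupies exactly the columns $1,\dots,|P|_i$, with one cell in each. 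Condition~(3) of the definition of a $0$-admissible tableau puts the letter $i$ of column $1$ in a row $i_1\le i$, and applying condition~(2) to the cell of $P$ carrying the letter $i$ in column $j$ forces the letter $i$ of column $j+1$ into a row $i_{j+1}\le i_j$. Hence $m\ge i\ge i_1\ge i_2\ge\cdots\ge i_{|P|_i}$, which is the asserted monotonicity.

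For the reverse implication, let $P$ be a filling of cells of $\tilde\Lambda$ such that $[|P|_1+1,\dots,|P|_m+1]\in S_m$, the letter $i$ occurs precisely in columns $1,\dots,|P|_i$, and its successive row positions satisfy $m\ge i\ge i_1\ge\cdots\ge i_{|P|_i}$. First, every cell of $P$ lies in one of the first $m$ rows of $\tilde\Lambda$, since the letter $i$ only occupies rows $\le i\le m$; and because the content vector is a permutation, exactly $m-1$ cells of the first column of $\tilde\Lambda$ are used by $P$. To check that $P\in\mathcal C_{\tilde\Lambda}$ I would extend $P$ to a full tableau $T$ of shape $\tilde\Lambda$ exactly as in the proof of Lemma~\ref{lemmacoef}: force the first column of the non-fermionic portion so that cell $(i,1)$ receives the letter $i$, put any so-far-unused letter $>m$ into the (at most one) first-column cell of the fermionic portion left uncovered by $P$, and distribute the remaining letters of $\{m+1,\dots,N\}$, each used exactly once and in any order, over the remaining cells of $\tilde\Lambda$ (the counts match since $N=\ell_{n,m}+m$). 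One then checks that conditions (1)--(3) survive: (1) and (2) impose nothing on a letter occurring once, and for the letters $\le m$ they already hold in $P$ (this is exactly the monotonicity hypothesis), while every first-column cell $(r,1)$ of $T$ carries a letter $\ge r$ by construction, so (3) holds. As ${\rm ev}(T)=(|P|_1,\dots,|P|_m,1,\dots,1)$, Lemma~\ref{lemmaev} shows $T$ contributes, whence $P=T_{(m)}\in\mathcal C_{\tilde\Lambda}$; and $P$ is good by construction.

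The ``in particular'' clause then follows immediately: by the two implications, $P$ is a good $\tilde\Lambda$-configuration precisely when it is a filling of cells of $\tilde\Lambda$ all of whose cells lie in rows $1,\dots,m$, whose content vector is a permutation, and whose letters have weakly decreasing row positions in the above sense --- and each of these conditions refers only to the first $m$ rows of $\tilde\Lambda$, that is, to its fermionic portion. I expect the only real work to be in the reverse implication, and specifically in the verification that the greedy completion of $P$ by the letters $m+1,\dots,N$ does yield a $0$-admissible tableau for \emph{every} such $P$; this is essentially the bookkeeping already carried out in the proof of Lemma~\ref{lemmacoef}, so the obstacle is one of care rather than of a genuinely new idea.
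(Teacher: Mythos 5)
Your proof is correct and takes essentially the same route as the paper: the forward implication is exactly the paper's argument (conditions (2) and (3) of $0$-admissibility plus Lemma~\ref{lemmaev}), and the ``in particular'' clause follows as stated. The only addition is that you spell out the converse, which the paper leaves implicit, via the natural completion of $P$ by the letters $m+1,\dots,N$ in the spirit of the proof of Lemma~\ref{lemmacoef}; your verification of that completion (counts match, conditions (1)--(2) are vacuous for letters occurring once, and every first-column cell receives a letter at least equal to its row index) is sound.
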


We will now see that there is an easy description of the $\alpha$-hooklengths
of the cells in the fermionic portion of $\tilde \Lambda$.
Let $v_k(\Lambda^s)$
be equal to the number of rows of $\Lambda^s$ that are smaller or
equal to $k$.  Then it is easy to see
that we have, for $(i,j) \in \tilde
\Lambda$
such that $1 \leq i \leq m$:
\begin{equation}\label{eq:lambdasonlambdaa}
d_{\tilde \Lambda}((i,j))= \alpha(\tilde \Lambda_i-j+1)+{\bf l}_{\tilde
  \Lambda}'((m,j)) -(m-i) +v_{\tilde \Lambda_i}(\Lambda^s) - v_{j-1}(\Lambda^s) +1 \, .
 \end{equation}
It proves convenient to write this equation as
\begin{equation} \label{equadsimple}
d_{\tilde \Lambda}((i,j))= a_i+b_j \, ,
 \end{equation}
where $a_i= \alpha \tilde \Lambda_i + v_{\tilde \Lambda_i}(\Lambda^s) +i$
and $b_j = \alpha(1-j)+{\bf l}_{\tilde
  \Lambda}'((m,j)) -m- v_{j-1}(\Lambda^s) +1$.
  Note that we have
\begin{equation}\label{eq:btoa}
b_{\tilde \Lambda_j+1} = 1 -a_j \, ,
\end{equation}
since ${\bf l}_{\tilde \Lambda}'((m,\tilde \Lambda_j+1))=m-j$.  This implies
that
\begin{equation}
(-1)^{m(m-1)/2} \prod_{1\leq j < i \leq m} d_{\tilde
  \Lambda}((i,\tilde \Lambda_{j}+1)) = \prod_{1\leq j < i \leq m} (a_j-a_i-1)
\, .
\end{equation}
Using Lemma~\ref{lemmagconf} and the previous equation, \eqref{equacmin2} becomes
\begin{equation}
c_\La^{\mathrm{min}}(\alpha) \prod_{s \in
\Lambda^{\circ}} \Bigl( \alpha a_{\Lambda}(s) + \ell_{\Lambda}(s)+1
\Bigr)  =
\frac{1} {\prod_{1\leq j < i \leq m} (a_j-a_i-1)}
\sum_{P \in \mathcal G_{\tilde \Lambda}} {\rm sgn}(P)
d_{P} \, ,
\end{equation}
where
\begin{equation} \label{equadp}
d_P  := \prod_{(i,j) \in P \, ; \, (i,j) \, \, 0{\text -{\rm critical}}} (a_i+b_j)
\, .
\end{equation}
First observe that only $b_1,\dots,b_{m-1}$ will appear in $d_P$
since the definition of a good $\tilde \Lambda$-configuration $P$
implies that the cells of $P$ all lie within the first $m-1$ columns, as do all its
$0$-critical cells. It is also
natural to consider the $a_i$'s and $b_i$'s as general
indeterminates rather than as the special expressions given after Equation
\eqref{equadsimple}.    Therefore,
Proposition~\ref{propo} holds if the following identity holds.
\begin{identity}[First form of the identity]  \label{iden1}
Let $a_1,\dots,a_m$ and $b_1,\dots,b_{m-1}$
be indeterminates such that if $\tilde \Lambda_i < m-1$ then
$b_{\tilde \Lambda_i+1} = 1-a_i$.  We have then
\begin{equation}
\prod_{1\leq j < i \leq m} (a_j-a_i-1)  =\sum_{P\in \mathcal G_{\tilde
    \Lambda}}
{\rm sgn}(P) \,
d_{P} \, ,
\end{equation}
where we recall that the sum is over the set of good
$\Lambda_{\mathrm{min}}$-configurations described in
Lemma~\ref{lemmagconfsimple},
${\rm sgn}(P)$ is the sign of the permutation
$[m-|P|_1,\dots,m-|P|_m]$,
and $d_P$ was defined in \eqref{equadp}.
\end{identity}
This identity can be translated into
the language of partitions.
For $i=1,\dots,m$,
let $\lambda^{(i)}$ be a partition of length $i$ with no parts
larger than $m$.
We say that $\lambda^{(1)},\dots,\lambda^{(m)}$ are non-intersecting
if  the $j$-th parts of $\lambda^{(j)},\lambda^{(j+1)},\dots,\lambda^{(m)}$
are distinct for $j=1,\dots,m$.  In particular, this implies that
$[\lambda^{(1)}_1,\dots,\lambda^{(m)}_1]$ is a permutation in $S_m$.
Given $\gamma=(\gamma_1,\dots,\gamma_{m-1})\in \{0,1 \}^{m-1}$,
we define
$\mathcal V_{\gamma}$ to be the set of
$(\lambda^{(1)},\dots,\lambda^{(m)})$ such that
$\lambda^{(1)},\dots,\lambda^{(m)}$ are non-intersecting and
such that
$\lambda^{(i)}_{j+1}> \# \{k \leq j | \, \gamma_k=1  \}$
for all $i=j+1,\dots,m$.  Finally, we say that $(i,j)$ is critical in
$(\lambda^{(1)},\dots,\lambda^{(m)}) \in \mathcal V_{\gamma}$ if
$i \geq j \geq 2$ and $\lambda^{(i)}_j=\lambda^{(i)}_{j-1}$.
\begin{identity}[Second form of the identity] \label{iden2}
Let $\gamma=(\gamma_1,\dots,\gamma_{m-1})\in \{0,1 \}^{m-1}$.
Let also
$a_1,\dots,a_m$ and $b_1,\dots,b_{m-1}$
be indeterminates such that if $\gamma_j = 1$ then
$b_{j} = 1 -a_r$, where $r= \# \{k \leq j | \, \gamma_k=1  \}$.
We have then
\begin{equation}
\prod_{1\leq j < i \leq m} (a_i+1-a_j)
=\sum_{(\lambda^{(1)},\dots,\lambda^{(m)}) \in \mathcal V_{\gamma}}
{\rm sgn}([\lambda_1^{(1)},\dots,\lambda_1^{(m)}]) \,
\prod_{(i,j)~{\rm critical}} (a_{\lambda^{(i)}_j} +
b_{j-1}) \, ,
\end{equation}
where the set $\mathcal V_{\gamma}$ was defined above.
\end{identity}
\begin{proof}[Proof that Identity~\ref{iden1} and Identity~\ref{iden2} are
  equivalent]  Let $\gamma_j=1$ iff there is a part of size $j-1$
in the fermionic portion of $\tilde \Lambda$.
We thus have that $\tilde \Lambda_i< m-1$ iff
 $\gamma_j=1$ for $j=\tilde \Lambda_i+1$.
In this case, $b_{\tilde \Lambda_i+1}=1-a_i$ is equivalent to
$b_{j} = 1 -a_r$, with
$r= \# \{k \leq j | \, \gamma_k=1  \}$, given that $i$ is equal
to the number
of parts smaller or equal to $\Lambda_i$ in the fermionic portion of
$\tilde \Lambda$.  Note that $\gamma$ is only in bijection with the
fermionic portion of $\tilde \Lambda$ whose parts are smaller than
$m-1$.  But since this is the only relevant part in Identity~\ref{iden1},
the relations between the $a_i$'s and $b_j$'s are the same.

\begin{figure}
\psfrag{0}{$0$}
\psfrag{1}{$1$}
\psfrag{2}{$2$}
\psfrag{3}{$3$}
\psfrag{4}{$4$}
\psfrag{5}{$5$}
\psfrag{6}{$6$}
\psfrag{7}{$7$}
\psfrag{m}{$m$}
\psfrag{m-1}{$m-1$}
\psfrag{T}{$P_{\geq m}$}
\psfrag{ga=}{$\gamma=$}
\includegraphics[height=6cm]{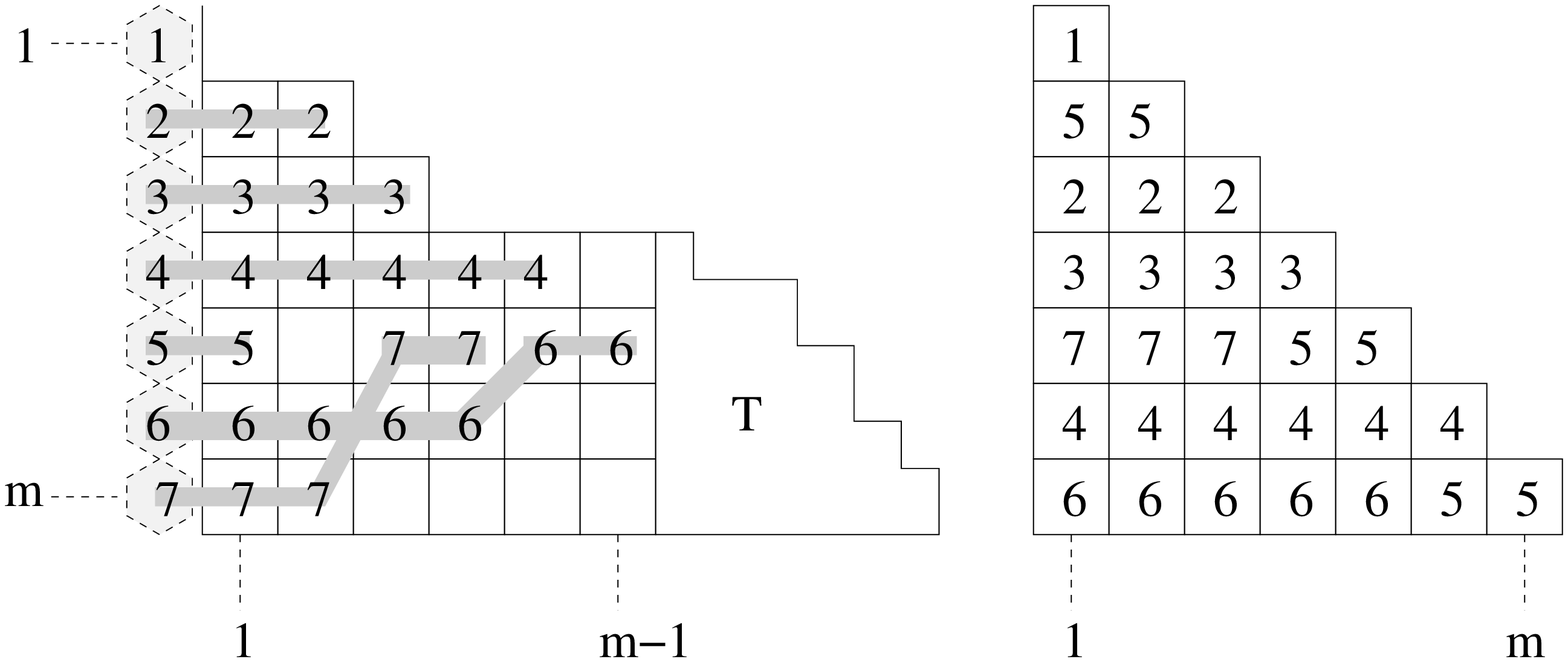}
\caption{\label{fig:fromtableauxtopartitions} An example of
the bijection between $\mathcal G_{\tilde \Lambda}$ and $\mathcal V_{\gamma}$
in the case $m=7$ and $\gamma=(1,0,1,1,0,0)$.
On the left, we draw the diagrammatic representation of the
relevant part of the good-$\tilde \Lambda$ configuration
$P$ and an additional column $0$ of
hexagons labeled by the rows' indices.  Cells to the right of column $m-1$
define a subconfiguration $P_{\geq m}$ whose shape or labels do not
contribute to the weight. On the right, we have the element of $\mathcal V_{\gamma}$
on which is mapped this configuration. In the configuration,
the thick grey line starting from the
hexagon labeled by $i$ represents the row $\lambda^{(k_i+1)} =
(i,i_1,i_2,\ldots i_{k_i})$ in the partition.
 }
\end{figure}

We now show that there is a bijection between the sets $\mathcal G_{\tilde \Lambda}$
and $\mathcal V_{\gamma}$.
Let $P \in \mathcal G_{\tilde \Lambda}$.  Suppose
that letter $i$ is such that $|P_i|=k_i$ (that is, letter $i$
occurs $k_i$ times in $P$).  From Lemma~\ref{lemmagconfsimple},
this implies that letter $i$ appears in columns $1,\dots,k_i$ in
positions  $i_1,\dots,i_{k_i}$ such that
$m \geq i \geq i_1 \geq i_2 \geq \cdots \geq i_{k_i}$.  This gives
us a partition $\lambda^{(k_i+1)}=(i,i_i,i_2,\dots,i_{k_i})$ of length $k_i+1$
with no parts larger than $m$.
If we do the same for $i=1,\dots,m$ we obtain partitions
$\lambda^{(1)},\dots,\lambda^{(m)}$ that are non-intersecting since
the $i_j$'s are
distinct for a fixed $j$ (given that
no two
letters can occupy the same cell). Furthermore,
if
$j> \tilde \Lambda_i$ then cell $(i,j)$ is not in $P$.
The only rows $l$ that are allowed in column $j$ are thus those such that
$l>  \# \{k \leq j | \, \gamma_k=1  \}$.  Since the cell $(l,j)$ in $P$
corresponds to the $(j+1)$-th part of
$\lambda^{(i)}$ for some $i$,
we have the
condition $\lambda^{(i)}_{j+1}> \# \{k \leq j | \, \gamma_k=1  \}$
for all $i=j+1,\dots,m$.  Given
a $(\lambda^{(1)},\dots, \lambda^{(m)}) \in \mathcal V_{\gamma}$, one can
easily reconstruct the corresponding $P \in \mathcal G_{\tilde \Lambda}$
by reversing the procedure we just described.
Figure~\ref{fig:fromtableauxtopartitions}
provides an example of the bijection we just constructed.

If $P \longleftrightarrow (\lambda^{(1)},\dots, \lambda^{(m)})$ in
the bijection, the permutation
 $[\lambda_1^{(1)},\dots,\lambda_1^{(m)}]$ is the inverse of the permutation
$[|P|_1+1,\dots,|P|_m+1]$ since in the bijection $\lambda^{(j)}_1=i$ iff
$|P|_i+1=j$.  This implies that
$${\rm sgn}([\lambda_1^{(1)},\dots,\lambda_1^{(m)}])=
{\rm sgn}([|P|_1+1,\dots,|P|_m+1]) \, ,
$$
given that ${\rm sgn}(w)={\rm sgn}(w^{-1})$ for any permutation $w$. Since
$${\rm sgn}([|P|_1+1,\dots,|P|_m+1])=(-1)^{m(m-1)/2} \,
{\rm sgn}([m-|P|_1,\dots,m-|P|_m]) \, ,$$
this takes into account the changes from $(a_j-a_i-1)$ to $(a_i+1-a_j)$
in the L.H.S. of the identity.

Finally,  we have that
\begin{equation}
\prod_{(i',j') \in P \, ; \, (i',j') \, \, 0{\text -{\rm critical}}} (a_{i'}+b_{j'}) =
\prod_{(i,j)~{\rm critical}} (a_{\lambda^{(i)}_j} +
b_{j-1})\, .
\end{equation}
This is seen in the following way.   Observing that cell $(i',j')$ of $P$,
when
filled with an integer,
corresponds in the bijection to a $\lambda_{j=j'+1}^{(i)}$ for some $i \geq
j$, we have that
$(a_{i'}+b_{j'})=(a_{\lambda_j^{(i)}}+b_{j-1})$. Then
recall that
$(i',j')$ is $0$-critical iff (a)
$j'>1$ and $(i',j'-1)$ is filled with the same letter as $(i',j')$ or (b)
$j'=1$ and $(i',j')=(i',1)$ is filled with an $i'$.  Therefore, we have that
case (a) occurs
iff $\lambda_j^{(i)}=\lambda_{j-1}^{(i)}$ for some $i\geq j\geq 3$ and case (b) occurs
iff $\lambda_2^{(i)}=\lambda_{1}^{(i)}$ for some $i\geq 2$.
\end{proof}


\section{Proof of Identity~\ref{iden2}}

\subsection{Connection with Gessel-Viennot}

We will call the elements in $\mathcal{V}_{\gamma}$ non-intersecting
triangular tableaux compatible with the vector $\gamma$.
The R.H.S. of the equation in Identity~\ref{iden2}
will be denoted by $\Sigma(\gamma)$.
Our goal is thus to show that $\Sigma(\gamma)=\prod_{1\leq j < i \leq m} (a_i+1-a_j)$.

We will say that a partition $\lambda$ of length $i$ is compatible with $\gamma \in \{0,1
\}^{m-1}$ if every part of $\lambda$ is not larger than $m$  and if
$\lambda_{j+1}> \# \{k \leq j | \, \gamma_k=1  \}$
for all $j=1,\dots,\ell(\lambda)-1$.  In this case,
we will say that entry $j$
is critical in $\lambda$ if $\ell(\lambda) \geq j\geq 2$ and $\lambda_{j}=\lambda_{j-1}$.
The weight of $\lambda$ will then simply be
\begin{equation}
w(\lambda) = \prod_{j~{\rm critical}} (a_{\lambda_j} +
b_{j-1}) \, .
\end{equation}
Note that the $a_i$'s and $b_j$'s
are variables not yet necessarily related as in Identity~\ref{iden2}.

We denote by $P_{j,i}(\gamma)$ the sum of weights of
partitions of length $i$, whose first part is equal
to $j$, and that are compatible with $\gamma$ . We define the $m$ by $m$ matrix $M(\gamma)$ as
$$\displaystyle \bigl( M(\gamma) \bigr)_{j,i} = P_{j,i}(\gamma).
$$
A triangular tableau $R$ compatible with the partition $\gamma$ is
 a sequence $(\lambda^{(1)},\ldots,\lambda^{(m)})$ of $m$ partitions compatible
 with $\gamma$ such that $\lambda^{(i)}$ is of length
$i$ and such that
 $\sigma_R = [\lambda^{(1)}_1,\ldots,\lambda^{(m)}_1]$ is a permutation of
 $S_m$.  The weight of a triangular tableau $R$ is
$$\displaystyle
w(R)=\mbox{sign}(\sigma_R)\prod_{i=1}^mw(\lambda^{(i)}).$$
We denote by
$\Sigma_{pi}(\gamma)$ the weighted sum of all the (possibly intersecting)
triangular tableaux compatible with $\gamma$.

\begin{lemma}\label{lgv} For any sequence $\gamma \in \{0,1\}^{m-1}$,
we have
\begin{equation}
\Sigma(\gamma) = \Sigma_{pi}(\gamma) = \det M(\gamma).
\end{equation}
\end{lemma}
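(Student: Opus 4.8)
The plan is to prove the two claimed equalities $\Sigma(\gamma) = \Sigma_{pi}(\gamma)$ and $\Sigma_{pi}(\gamma) = \det M(\gamma)$ separately. The first is a standard Lindstr\"om--Gessel--Viennot cancellation, and the second is a direct expansion of the determinant.

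\textbf{Step 1: $\Sigma_{pi}(\gamma) = \det M(\gamma)$.}  Expanding the determinant along its definition,
$$\det M(\gamma) = \sum_{\sigma \in S_m} {\rm sgn}(\sigma) \prod_{i=1}^m P_{\sigma(i),i}(\gamma) = \sum_{\sigma \in S_m} {\rm sgn}(\sigma) \prod_{i=1}^m \Biggl( \sum_{\substack{\lambda \text{ compatible with }\gamma \\ \ell(\lambda)=i,\ \lambda_1=\sigma(i)}} w(\lambda) \Biggr).$$
Distributing the product of sums over all choices, a term corresponds exactly to a choice of partitions $\lambda^{(1)},\dots,\lambda^{(m)}$ with $\lambda^{(i)}$ compatible with $\gamma$, of length $i$, and with $\lambda^{(i)}_1 = \sigma(i)$; in other words $\sigma = \sigma_R$ for the triangular tableau $R = (\lambda^{(1)},\dots,\lambda^{(m)})$.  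The product of the weights $w(\lambda^{(i)})$ times ${\rm sgn}(\sigma_R)$ is precisely $w(R)$, so the sum over $\sigma$ and over all compatible choices reassembles to $\sum_R w(R) = \Sigma_{pi}(\gamma)$.  This step is purely formal.

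\textbf{Step 2: $\Sigma(\gamma) = \Sigma_{pi}(\gamma)$ via a sign-reversing involution.}  By definition $\Sigma(\gamma)$ is the sum of $w(R)$ over \emph{non-intersecting} triangular tableaux $R \in \mathcal V_{\gamma}$, while $\Sigma_{pi}(\gamma)$ sums over all triangular tableaux.  It therefore suffices to exhibit a weight-preserving, sign-reversing involution on the set of \emph{intersecting} triangular tableaux compatible with $\gamma$.  I would follow the usual Gessel--Viennot argument: given an intersecting $R$, there is a smallest column index $j$ and a smallest pair of row indices $i < i'$ (with $i,i' \geq j$) such that $\lambda^{(i)}_j = \lambda^{(i')}_j$; swap the tails $(\lambda^{(i)}_j, \lambda^{(i)}_{j+1}, \dots)$ and $(\lambda^{(i')}_j, \lambda^{(i')}_{j+1}, \dots)$ of the two partitions below row $j$.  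One must check that (i) the resulting partitions still have the right lengths (this forces a careful choice of which intersection to resolve --- the lengths of $\lambda^{(i)}$ and $\lambda^{(i')}$ differ, so swapping full tails changes the lengths; the correct move is to swap the portions $\lambda^{(i)}_{\geq j}$ and $\lambda^{(i')}_{\geq j}$ only up to the shorter length and handle the extra entries appropriately, or to choose the crossing so that this works), (ii) compatibility with $\gamma$ is preserved (the condition $\lambda_{k+1} > \#\{l \leq k \mid \gamma_l = 1\}$ only depends on column position, not on which partition an entry belongs to, so swapping entries in fixed columns preserves it), (iii) the product $\prod_{k~{\rm critical}} (a_{\lambda_k} + b_{k-1})$ over both partitions is unchanged (again because criticality and the weight of a critical cell depend only on the column and the entry value, both of which are preserved by the swap), and (iv) $\sigma_R$ changes by a transposition, so ${\rm sgn}(\sigma_R)$ flips.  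Hence intersecting tableaux cancel in pairs and $\Sigma_{pi}(\gamma) = \Sigma(\gamma)$.

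\textbf{Main obstacle.}  The delicate point is item (i): making the tail-swap operation land back inside the set of triangular tableaux, where $\lambda^{(i)}$ is required to have length exactly $i$.  Because the two partitions being swapped have different prescribed lengths, one cannot naively exchange the full tails; the involution must be set up so that the swapped segments have equal length, or so that the length discrepancy is absorbed.  The standard way around this is to recast the partitions as lattice paths (each $\lambda^{(i)}$ of length $i$ as a path from a source depending on $i$ to a sink depending on $\lambda^{(i)}_1$, with steps encoding the successive parts) so that a genuine path crossing can be swapped in the usual LGV fashion with automatic length bookkeeping; then one verifies that the path weight matches $w(\lambda)$ and that the non-crossing paths are exactly the elements of $\mathcal V_{\gamma}$.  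I would spend the bulk of the write-up setting up this path model carefully and checking the weight correspondence, after which Step 1 and the involution of Step 2 are routine.
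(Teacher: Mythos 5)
Your Step 1 (expanding $\det M(\gamma)$ and reassembling the terms into $\Sigma_{pi}(\gamma)$) is exactly the paper's argument and is fine. The gap is in Step 2. The involution you actually describe --- swapping the \emph{tails} $\lambda^{(i)}_{\geq j}$ and $\lambda^{(i')}_{\geq j}$ at the first intersection --- is the wrong operation on two counts. Besides the length problem you notice (the rows have prescribed lengths $i\neq i'$), it is not sign-reversing: the permutation $\sigma_R=[\lambda^{(1)}_1,\dots,\lambda^{(m)}_1]$ is read off from the \emph{first} parts, which sit in column $1$, and since the first column of a triangular tableau is a permutation the first intersecting column satisfies $j\geq 2$, so a tail swap leaves $\sigma_R$ untouched and the sign does not flip. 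Your item (iv) is therefore false for the map as described, and the fallback ("recast as lattice paths ... I would spend the bulk of the write-up setting up this path model") is never carried out, so the main content of the lemma is left unproven.

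The missing idea is simple and is what the paper does: at the first column $j_R$ containing a repeated value $x_R$, take the two relevant rows (the paper takes the shortest and next-shortest rows containing $x_R$ in that column) and swap their \emph{prefixes}, i.e.\ the entries in columns $1,\dots,j_R-1$, leaving everything from column $j_R$ on in place. The two swapped segments have the same length $j_R-1$, so the prescribed row lengths are automatically preserved; each row remains a partition because the entries in column $j_R-1$ of both rows are $\geq x_R$, the common value in column $j_R$; compatibility with $\gamma$ and the weight are preserved because criticality and the factor $(a_{\lambda_j}+b_{j-1})$ depend only on the column index and the value, the only delicate column being $j_R$ itself, where both rows carry the same value $x_R$; and since $j_R>1$ the first-column entries are exchanged, so $\sigma_R$ changes by a transposition and the sign flips. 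This is precisely the "swap toward the endpoints that index the permutation" direction of the LGV argument; in your source/sink language the lengths play the role of sources and the first parts the role of sinks, and you swapped toward the sources instead of the sinks.
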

For readers familiar with the Lindstr{\"o}m-Gessel-Viennot
lemma (LGV-lemma) \cite{LGV}, remark
that Lemma~\ref{lgv} is an instance of the LGV-lemma.
Indeed, there is an interpretation of Lemma~\ref{lgv} in terms
of ``system of paths'' in a directed acyclic graph depending on
$\gamma$ where each row corresponds to one path. For the sake of
simplicity we choose to reproduce the proof of the general LGV-lemma
in terms of our objects instead of giving an explicit bijection
preserving weights with system of paths of the {\it ad hoc} graph.

\begin{proof}
>From the definition of a determinant, we have
$$\det M(\gamma) = \sum_{\sigma \in S_m}
\mbox{sign}(\sigma)\prod_{i=1}^m P_{\sigma(i),i}(\gamma).
$$
Then, from the definition of $P_{j,i}(\gamma)$, we obtain
$$\det M(\gamma) = \sum_{\sigma \in S_m}
\mbox{sign}(\sigma)\prod_{i=1}^m\left(\sum_{\mbox{$\lambda^{(i)}$ of length $i$
and $\lambda^{(i)}_1=\sigma(i)$}} w(\lambda^{(i)}) \right).$$ After
expanding the product of the $m$ inner sums we recognize the
weighted sum of triangular tableaux compatible with $\gamma$.  Hence
$$ \displaystyle \det M(\gamma) = \Sigma_{pi}(\gamma).$$

\begin{figure}
\psfrag{jr}{$j_R$}
\psfrag{ir}{$i_R$}
\psfrag{kr}{$k_R$}
\psfrag{1}{$1$}
\psfrag{2}{$2$}
\psfrag{3}{$3$}
\psfrag{4}{$4$}
\psfrag{5}{$5$}
\psfrag{6}{$6$}
\psfrag{7}{$7$}
\psfrag{8}{$8$}
\psfrag{9}{$9$}
\psfrag{10}{$10$}
\includegraphics[height=8cm]{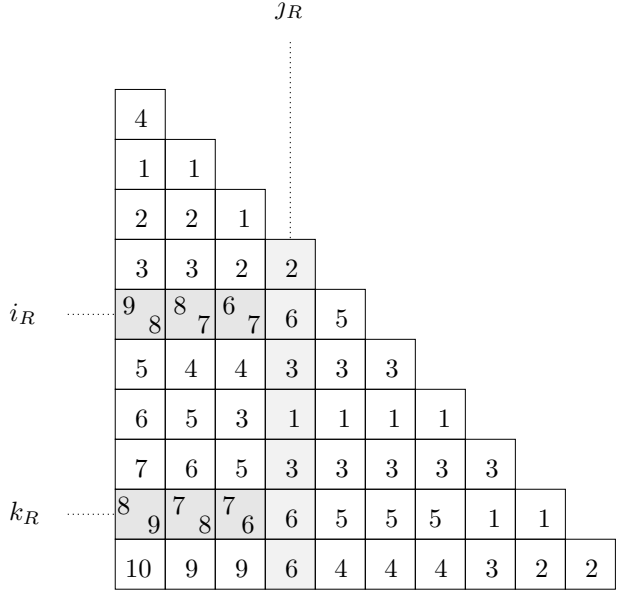}
\caption{The bijection $\Phi$ illustrated with an example. The triangular
  tableaux $R$ and $T$ are represented on the same diagram. Labels of
  $R$ and $T$, when distinct, are in the upper left corner and
  lower right corner respectively.
For the
sake of
  simplicity we chose $\gamma = 0^{m-1}$.}
\end{figure}

We describe a
sign-reversing involution $\Phi$ on the set intersecting tableaux
compatible with $\gamma$ to conclude that $\Sigma(\gamma) =
\Sigma_{pi}(\gamma)$. Let $R = (\lambda^{(1)},\ldots,\lambda^{(m)})$ be such a
tableau. Let $j_R$ be the index of the first column where at least one
entry occurs at least twice. Let $i_R$ be the shortest row in which such
an entry $x_R$ occurs in column $j_R$. Let $k_R$ be the next shortest
row in which $x_R$ occurs in column $j_R$. We define $\Phi(R) = T =
(\tau^{(1)},\ldots,\tau^{(m)})$ by $\tau^{(i_R)}_j=\lambda^{(k_R)}_j$ and
$\tau^{(k_R)}_j=\lambda^{(i_R)}_j$ if $j < j_R$, otherwise $\tau^{(i)}_j
= \lambda^{(i)}_j$.  In other words $\Phi$ corresponds to the exchange of
the entries in row $i_R$ and $k_R$ in all the columns whose index is
strictly lower than $j_R$. Moreover $\Phi$ preserves $j_R$, $i_R$,
$x_R$ and $k_R$ so $\Phi$ is an involution. It remains to check that
$T$ is a triangular tableau compatible with $\gamma$ such that $w(T) =
-w(R)$.  By definition of a triangular tableau, the first column is a
permutation thus $j_R > 1$ so $\sigma_T$ is the appropriate
composition of $\sigma_R$ by the transposition exchanging $i_R$ and
$k_R$. This implies that $\mbox{sign}(\sigma_T) =
-\mbox{sign}(\sigma_R)$. The rows of $T$ remain partitions
because the two exchanged entries in column $j_R-1$
are not smaller than the common value $x_R$ in column $j_R$ of the
corresponding rows.
Finally, it is easy to see that the weight of $R$ and $T$ are
the same.  First observe that by construction the contribution to the weight
coming from the critical entries smaller than $j_R$ is the same in
$\tau^{(i_R)}$ (resp. $\tau^{(k_R)}$) and $\lambda^{(k_R)}$
(resp. $\lambda^{(i_R)}$).  Similarly, the contribution to the weight
coming from the critical entries larger than $j_R$ is the same in
$\tau^{(i_R)}$ (resp. $\tau^{(k_R)}$) and $\lambda^{(i_R)}$
(resp. $\lambda^{(k_R)}$).
The result then follows since the possible critical entry
$j_R$ in $\tau^{(i_R)}$ (resp. $\tau^{(k_R)}$) and in $\lambda^{(k_R)}$
(resp. $\lambda^{(i_R)}$) would give the same contribution to the weight given that
the $j_R$-th entry in both partitions is $x_R$.
\end{proof}

We will first give a proof of the identity in
the case $\ga^0=(0,\ldots,0) \in \{0,1
\}^{m-1}$;
we will do
so by computing the determinant of $M({\ga^0})$ by elementary row
operations using certain technical results that we establish in the
next subsection. From this particular case
we will then be able to prove the result for
an arbitrary $\gamma \in \{0,1\}^{m-1}$.

\subsection{Technical results}

Let $\Par{j}{i}{k}$ be the sum of the weights of all partitions of length $i$
whose first part is $j$ and with at least one part equal to
$j-l$ for each $l=1,\dots,k$; we will use the notation $\PPar{j}{i}{k}$ for this
set of partitions. In particular, we have $\Par{j}{i}{0}=P_{j,i}(\gamma^0)$ which are the
  entries of the matrix $M(\gamma^0)$.

We start with a lemma describing how to compute $\Par{j}{i}{k}$
recursively; we introduce the notation $\Parsh{j}{i}{k}$ to stand for
the result of the substitution $b_1\leftarrow b_2,b_2\leftarrow b_3,
\dots,b_{i-1}\leftarrow b_{i}$
in $\Par{j}{i}{k}$.

\begin{lemma}
 \label{lem:recur}
 Let $k\in \mathbb{N}$. $\Par{j}{i}{k}=0$ if $j\leq k$ or $i\leq
 k$, and $\Par{j}{1}{0}=1$ for $j>0$. Otherwise,
\begin{eqnarray*}
\Par{j}{i}{0}& =&(a_j+b_1)\cdot\Parsh{j}{i-1}{0}+\Parsh{j-1}{i-1}{0}+
[\Par{j-1}{i}{0}-(a_{j-1}+b_1)\Parsh{j-1}{i-1}{0}] \, , \\
\Par{j}{i}{k}& =&(a_j+b_1)\cdot\Parsh{j}{i-1}{k}+\Parsh{j-1}{i-1}{k-1}
 \qquad \text{for~} k\geq 1 \, .
 \end{eqnarray*}
\end{lemma}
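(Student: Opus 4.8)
The plan is to derive every assertion from a single combinatorial operation: deleting the first part of a partition. First I would dispose of the base cases. If $j\le k$, a partition in $\PPar{j}{i}{k}$ would need a part equal to $j-k\le 0$, which is impossible, so $\PPar{j}{i}{k}=\emptyset$ and $\Par{j}{i}{k}=0$; if $i\le k$, such a partition would have to contain the $k+1$ pairwise distinct values $j,j-1,\dots,j-k$ among only $i$ parts, again impossible; and the unique partition of length $1$ with first part $j>0$ is $(j)$, which has no critical entry (criticality requires an index $\ge 2$), so $\Par{j}{1}{0}=w((j))=1$.

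For the recursions, I would fix $\la=(\la_1\ge\cdots\ge\la_i)\in\PPar{j}{i}{k}$ and set $\mu=(\la_2,\dots,\la_i)$, a partition of length $i-1$ with $\mu_1=\la_2\le j$. The computation hinges on one bookkeeping fact: for $j'\ge 3$, the index $j'$ is critical in $\la$ exactly when $j'-1$ is critical in $\mu$, and then the weight factor $a_{\la_{j'}}+b_{j'-1}$ of $\la$ equals the weight factor $a_{\mu_{j'-1}}+b_{j'-2}$ of $\mu$ with every $b_\ell$ replaced by $b_{\ell+1}$ — which is precisely the substitution recorded by the superscript ${}^+$. Hence, summing the contributions of the indices $j'\ge 3$ over all $\la$ whose tail $\mu$ runs through some set $\PPar{j'}{i-1}{k'}$ yields $\Parsh{j'}{i-1}{k'}$. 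In addition, index $2$ is critical in $\la$ iff $\mu_1=j$, in which case $w(\la)$ carries the extra factor $a_j+b_1$, and otherwise ($\mu_1<j$) it does not.

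Then I would split $\PPar{j}{i}{k}$ according to the value of $\mu_1$. For $k\ge 1$: since $\la$ contains a part $j-1$ and $\la_1=j$, that part lies in $\mu$, so either $\mu_1=j$ and $\mu\in\PPar{j}{i-1}{k}$, or $\mu_1<j$, which forces $\mu_1=j-1$ and then $\mu\in\PPar{j-1}{i-1}{k-1}$; both maps $\la\mapsto\mu$ are bijections onto these sets, and adding the two weighted contributions gives $\Par{j}{i}{k}=(a_j+b_1)\Parsh{j}{i-1}{k}+\Parsh{j-1}{i-1}{k-1}$. For $k=0$ there is no constraint on the remaining parts, so $\mu_1$ may be any $j'\in\{1,\dots,j\}$ and the same analysis gives $\Par{j}{i}{0}=(a_j+b_1)\Parsh{j}{i-1}{0}+\sum_{j'=1}^{j-1}\Parsh{j'}{i-1}{0}$. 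Applying this last identity with $j$ replaced by $j-1$ identifies $\sum_{j'=1}^{j-2}\Parsh{j'}{i-1}{0}$ with $\Par{j-1}{i}{0}-(a_{j-1}+b_1)\Parsh{j-1}{i-1}{0}$, and substituting back turns $\sum_{j'=1}^{j-1}\Parsh{j'}{i-1}{0}$ into $\Parsh{j-1}{i-1}{0}+\bigl[\Par{j-1}{i}{0}-(a_{j-1}+b_1)\Parsh{j-1}{i-1}{0}\bigr]$, which is the claimed first recursion.

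I do not expect a serious obstacle here; the two spots to be careful about are the weight bookkeeping in the second paragraph — that deleting $\la_1$ shifts all $b$-subscripts up by one, exactly as encoded by the superscript ${}^+$ — and the fact that in the case $k\ge 1$, $\mu_1<j$ the inequality $\mu_1\ge j-1$ pins down $\mu_1=j-1$, so that the two subcases split $\PPar{j}{i}{k}$ cleanly with no overlap. The telescoping in the $k=0$ case is then purely formal.
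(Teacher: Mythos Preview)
Your proof is correct and follows essentially the same approach as the paper: both argue by classifying partitions in $\PPar{j}{i}{k}$ according to the size of the second part, noting that deleting the first part shifts the $b$-indices by one (your ${}^+$ substitution). The only cosmetic difference is in the $k=0$ case, where the paper directly identifies the ``$\la_2<j-1$'' block with $\Par{j-1}{i}{0}-(a_{j-1}+b_1)\Parsh{j-1}{i-1}{0}$ via the bijection $(j,\la_2,\dots)\mapsto(j-1,\la_2,\dots)$, while you reach the same expression by first writing the full sum $\sum_{j'=1}^{j-1}\Parsh{j'}{i-1}{0}$ and then telescoping; these are equivalent.
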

\begin{proof}
The first part of the lemma is obvious given that $\PPar{j}{i}{k}$ is empty when
$j\leq k$ or $i\leq k$, and that $\PPar{j}{1}{0}$ contains only the partition $[j]$ of weight $1$.

For the first recurrence formula, the three terms correspond to the
subsets of $\PPar{j}{i}{0}$ consisting of partitions whose
second part has respectively
size $j$, $j-1$, or some $l<j-1$.  This latter term is equal to the
weighted sum of the elements of $\PPar{j-1}{i}{0}$ whose second
part is different from $j-1$.

As for the second recurrence formula, the two terms correspond simply to
the subsets of $\PPar{j}{i}{k}$ made out of partitions whose
second part has respectively
size $j$ and $j-1$.
\end{proof}

Let $\Del{j}{i}{k}$ be the difference $\Par{j}{i}{k}-\Par{j-1}{i}{k}$.
The main result is then the following:

\begin{proposition}
\label{th:interp} For $k\in \mathbb{N}$, $i>k$ and $j>k+1$, we have
$$\Del{j}{i}{k}=(a_j+1-a_{j-k-1})\Par{j}{i}{k+1}$$
\end{proposition}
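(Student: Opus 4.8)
The plan is to prove the proposition by induction on $i$, establishing the displayed identity simultaneously for all $k$ with $i>k$ and all $j>k+1$. The only ingredients needed are the two recurrences of Lemma~\ref{lem:recur} together with the elementary observation that the index-shift substitution $b_\ell\mapsto b_{\ell+1}$ — the operation turning $\Par{j}{i}{k}$ into $\Parsh{j}{i}{k}$ and, by additivity, $\Del{j}{i}{k}$ into $\Delsh{j}{i}{k}$ — is a ring homomorphism fixing every $a_i$, and so may be applied to any polynomial identity already proved at a lower level of the induction.

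For the base case $i=k+1$, I would argue that both sides vanish. Since $\Par{j}{i}{k+1}=0$ whenever $i\leq k+1$, the right-hand side is $0$. On the left, any partition counted by $\PPar{j}{k+1}{k}$ has length $k+1$ and must contain each of the $k+1$ distinct values $j,j-1,\dots,j-k$, hence equals the staircase $(j,j-1,\dots,j-k)$; its consecutive parts differ by exactly $1$, so it has no critical entry and weight $1$. Thus $\Par{j}{k+1}{k}=1$, and the same applies to $j-1$ (since $j>k+1$ forces $j-1-k\geq1$), giving $\Par{j-1}{k+1}{k}=1$ and therefore $\Del{j}{k+1}{k}=0$.

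For the inductive step, assume $i\geq k+2$ and that the identity holds at level $i-1$. Expanding $\Par{j}{i}{k}$ and $\Par{j-1}{i}{k}$ by the appropriate recurrence of Lemma~\ref{lem:recur} and subtracting, one isolates a leftover term which for $k\geq1$ is $\Parsh{j-1}{i-1}{k-1}-\Parsh{j-2}{i-1}{k-1}=\Delsh{j-1}{i-1}{k-1}$, and this is replaced, via the induction hypothesis at $(j-1,i-1,k-1)$ (valid since $j>k+1$ forces $j-1>k$), by $(a_{j-1}+1-a_{j-k-1})\Parsh{j-1}{i-1}{k}$; for $k=0$ the extra term $\Par{j-1}{i}{0}$ of the first recurrence simply cancels the $\Par{j-1}{i}{0}$ being subtracted. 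In both cases one obtains the uniform intermediate formula
\[
\Del{j}{i}{k}=(a_j+b_1)\,\Parsh{j}{i-1}{k}+(1-b_1-a_{j-k-1})\,\Parsh{j-1}{i-1}{k}.
\]
Expanding $\Par{j}{i}{k+1}$ by the second recurrence (applicable as $k+1\geq1$), a short computation shows that the coefficient of $\Parsh{j-1}{i-1}{k}$ in $\Del{j}{i}{k}-(a_j+1-a_{j-k-1})\Par{j}{i}{k+1}$ collapses to $-(a_j+b_1)$, and after factoring $(a_j+b_1)$ out one is left with
\[
(a_j+b_1)\bigl[\Delsh{j}{i-1}{k}-(a_j+1-a_{j-k-1})\Parsh{j}{i-1}{k+1}\bigr],
\]
whose bracket is precisely the induction hypothesis at $(j,i-1,k)$ — valid because $i-1\geq k+1>k$ — transported by the shift operator, hence zero. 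This closes the induction.

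The step I expect to be most delicate is the bookkeeping in the inductive step: one must track carefully which shifted quantities $\Parsh{\cdot}{\cdot}{\cdot}$ occur, confirm that the $k=0$ case — with its extra correction term from the first recurrence — really produces the same factored expression as the $k\geq1$ case, and check that every appeal to the induction hypothesis respects the constraints $i-1>k$ and $j-1>k$. The latter point is exactly why $i=k+1$ must be isolated as a base case rather than absorbed into the recursion.
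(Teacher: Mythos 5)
Your proof is correct and follows essentially the same route as the paper's recursive argument: everything rests on the two recurrences of Lemma~\ref{lem:recur} together with the $b_\ell\mapsto b_{\ell+1}$ shift, and the induction hypothesis is invoked at exactly the same two spots, $(j-1,i-1,k-1)$ and $(j,i-1,k)$, before recognizing $\Par{j}{i}{k+1}$ via the second recurrence. The only difference is bookkeeping: you run a single induction on $i$ covering all $k$ simultaneously (with the vanishing base case $i=k+1$ checked via the staircase partition), whereas the paper inducts on $k$ with a nested induction on $i$; the underlying algebra is identical.
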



\begin{proof}
We will prove this relation by induction on $k$.

 Case $k=0$; by reorganizing terms in the first recurrence
formula of Lemma \ref{lem:recur}, we obtain
    $$\Del{j}{i}{0}=(a_j+b_1)\cdot
    \Delsh{j}{i-1}{0}+(a_j+1-a_{j-1})\cdot \Parsh{j-1}{i-1}{0},$$
where $\Delsh{j}{i}{k}$ is naturally defined in general as the result
 of the substitutions $b_l\leftarrow b_{l+1}$ in $\Del{j}{i}{k}$.
  We may assume by induction on $i$, that the case $k=0$ of the proposition
 is true for $\Delsh{j}{i-1}{0}$ (the case $i=1$ being trivial); we
 thus get
 $$\Del{j}{i}{0}=(a_j+1-a_{j-1})\cdot[(a_j+b_1)\Parsh{j}{i-1}{1}
+\Parsh{j-1}{i-1}{0}]\, .$$
Here the second factor on the right hand side is then equal to
$\Par{j}{i}{1}$ by Lemma \ref{lem:recur}.   This proves the proposition in
the case $k=0$.

Case $k>0$; suppose the proposition is true for $k-1$.  This gives
\begin{eqnarray*}
\Del{j}{i}{k}&=&(a_{j}+b_1)\cdot \Delsh{j}{i-1}{k} + (a_j-a_{j-1})\cdot \Parsh{j-1}{i-1}{k}+\Delsh{j-1}{i-1}{k-1} \\
&=& (a_{j}+b_1)(a_j+1-a_{j-k-1})\Parsh{j}{i-1}{k+1}+[(a_j-a_{j-1})+(a_{j-1}+1-a_{j-k-1})]
\cdot \Parsh{j-1}{i-1}{k}\\
&=& (a_j+1-a_{j-k-1})\cdot \left[(a_{j}+b_1)\Parsh{j}{i-1}{k+1} +\Parsh{j-1}{i-1}{k}\right]
\end{eqnarray*}
The first equality comes from Lemma \ref{lem:recur}, the second by
induction on $i$ for $\Delsh{j}{i-1}{k}$ and by the induction
hypothesis for $\Delsh{j-1}{i-1}{k-1}$. We then recognize
$\Par{j}{i}{k+1}$ on the right hand side thanks to Lemma \ref{lem:recur}
again.  The proof is then complete.
\end{proof}

This recursive proof of Proposition \ref{th:interp} does not really explain
the simplicity of its result; for this, we found a bijective proof,
that is given in the Appendix.


\subsection{Proof of the $\gamma^0$ case}
\label{sub:proofsquare}

Let us consider the matrix $M({\ga^0})=(\Par{j}{i}{0})$, whose
determinant we have to compute since, from Lemma~\ref{lgv},
we have $\Sigma(\ga^0)=\det(M({\ga^0}))$.

Let us first perform on $M({\ga^0})$ the elementary row operations
$L_j\leftarrow L_j-L_{j-1}$ with $j=m,m-1,\ldots,2$, in this
order. The coefficients that appear in rows $2$ to $m$ then correspond to
$\Del{j}{i}{0}$ for $j>1$. By Proposition~\ref{th:interp}, we have
that for $j>1$ the quantity $a_j+1-a_{j-1}$ is a factor of every
coefficient in row $j$.

So $\det(M({\ga^0}))=\prod_{j>1}(a_j+1-a_{j-1}) \det(M^{[1]})$, where
the entries of $M^{[1]}$ are given by
\begin{equation*}
m_{ji}^{[1]}=\begin{cases}
&\Par{j}{i}{0}~~\mathrm{ for }~j= 1\\
&\Par{j}{i}{1}~~\mathrm{ for }~j>1
\end{cases}
\end{equation*}

We repeat the operations $L_j\leftarrow L_j-L_{j-1}$ for
$j=m,m-1,\ldots,3$ on $M^{[1]}$.  Coefficients $\Del{j}{i}{1}$
then appear in rows 3 and below.  This implies that the quantities $a_j+1-a_{j-2}$
are factors of the determinant for $j=m,m-1,\ldots,3$.  Factorizing these
quantities  we obtain a new matrix $M^{[2]}$.  One naturally
 applies this process successively,
using Proposition~\ref{th:interp} at each step, to obtain
matrices $M^{[3]},\ldots, M^{[N-1]}$.  At the final stage we get by induction that
\begin{equation}
\det(M({\ga^0}))=\prod_{i>j}(a_i+1-a_j)\times \det(M^{[N-1]}),
\end{equation}
 where the coefficient $(j,i)$ of $M^{[N-1]}$ is
 $m_{ji}^{[N-1]}=\Par{j}{i}{j-1}$.

Now for $j>i$, $\Par{j}{i}{j-1}$ is $0$ by Lemma \ref{lem:recur}; and
for $i=j$, $m_{ii}^{[N-1]}=\Par{i}{i}{i-1}$, which is the weighted
enumeration of $\PPar{i}{i}{i-1}$. But this last set is easily seen to
contain just one element, namely $(i,i-1,\ldots,1)$ which has weight
$1$. So $M^{[N-1]}$ is upper triangular with $1$'s on the diagonal, and
has consequently a determinant equal to $1$.  This completes the proof of
Identity \ref{iden2} in the case of $\ga^0$.


\subsection{Proof of the general case}

We now wish to prove that $\Sigma(\gamma)=\prod_{1\leq j < i \leq m }
(a_i+1-a_j)$ for any
$\ga \in \{0,1 \}^{m-1}$. We will not prove it by using the determinantal form of Lemma \ref{lgv}, but instead by using the $\gamma^0$ case to
deduce all the other cases.\medskip

Let
$i_1<i_2<\ldots<i_k$ be the indices of the 1's in $\gamma$. We will prove the
result by \emph{induction on $k$}.
If $k=0$ then $\ga=\gamma^0$ and the result has already been proven.
 Now let $\gamma$ be a sequence with $k>0$ entries equal to $1$, and
 let $\gamma'$ be the sequence where the last 1 of $\ga$ (with index
 $i_k$) is replaced by a 0. By induction, we have
 $\Sigma(\gamma')=\prod_{i>j}(a_i+1-a_j)$. In particular, the result
does not
 depend on the indeterminate $b_{i_k}$. We may thus set
 $b_{i_k}:=1-a_k$ in
 $\Sigma(\gamma')$
 without changing its value:
 \begin{equation}\label{eqpart1}
\Sigma(\ga')[b_{i_k}:=1-a_k]=\prod_{i>j}(a_i+1-a_j) \, .
\end{equation}
Given that the relations between the $a_i$'s and
$b_j$'s specified by Identity~\ref{iden2} are now satisfied, we
have the following natural decomposition
of weighted sums of non-intersecting triangular
tableaux
\begin{equation}\label{eqpart2}
\Sigma(\ga')[b_{i_k}:=1-a_k]=\Sigma(\gamma)+
\sum_{c\in \mathcal{V}'_{\ga}}w_\ga(c) \, ,\end{equation}
where $\mathcal{V}'_{\ga}$ consists of the non-intersecting triangular
tableaux that are compatible with $\gamma'$ but not with $\gamma$ (observe that
if a tableau is compatible with $\gamma$ then it is compatible with $\gamma'$),
and $w_\ga$ is the weight on tableaux with the relations $b_j=1-a_r$ induced by $\ga$
(as in Identity \ref{iden2}).
Since we wish to prove that $\Sigma(\gamma)=\prod_{i>j}(a_i+1-a_j)$,
we now have to check by equations \eqref{eqpart1} and \eqref{eqpart2} that
the sum over $\mathcal{V}'_{\ga}$ is zero.
This is a consequence of Lemma \ref{lem:invol} in the
next subsection which provides
an involution $\imath$ with no fixed points
on $\mathcal{V}'_{\ga}$  that verifies $w_\ga(c)=-w_\ga(\imath(c))$ for all
$c\in \mathcal{V}'_{\ga}$ when $b_{i_k}:=1-a_k$.
This completes the induction process, and proves Identity \ref{iden2} in all generality.

\subsection{The Involution}

It is easy to check that the set $\mathcal{V}'_{\ga}$ consists of the
non-intersecting triangular
tableaux compatible with $\gamma'$ such that for $j>i_k$
at least one of the
partitions in the tableau has its
$j$-th part equal to $k$. For such a tableau
$c=(\la^{(1)},\ldots,\la^{(m)})$, let $j_{min}$ be the smallest such
$j$, and define $\ell:=j_{min}-1$. Let also $\la^{(r)}$ be the
partition where this $j_{min}$-th part equal to $k$ appears; notice
that $r$ is well defined since from the definition of non-intersecting
triangular tableaux there cannot be two partitions with equal $j_{min}$-th parts.
\medskip

 Now, we define a non-intersecting triangular tableau
 $(\mu^{(1)},\ldots,\mu^{(m)})$ as
  \begin{itemize}
  \item $\mu^{(i)}=\la^{(i)}$ for $i\in \{1,\dots,m \}\backslash\{\ell,r\}$;
  \item $\mu^{(\ell)}=(\la^{(r)}_1,\ldots,\la^{(r)}_{\ell}$);
  \item $\mu^{(r)}=(\la^{(\ell)}_1,\ldots,
      \la^{(\ell)}_\ell,\la^{(r)}_{\ell+1}(=k),\ldots,\la^{(r)}_r)$
\end{itemize}

 For a configuration $c\in\mathcal{V}'_{\ga}$, we then define
  $\imath
(c):=(\mu^{(1)},\ldots,\mu^{(m)})$.

\noindent{\bf Example:~} We illustrate this construction in Figure
\ref{fig:invol}  in the case $\ga=(1,0,1,0,0)$. In the example, $\ell=4$
and $r=6$. The parts $\la^{(i)}_j=2$ with $j>3$ are circled ,
and the entries that are switched are framed.
\begin{figure}
\centering
\psfrag{i}{$\imath$}
\includegraphics{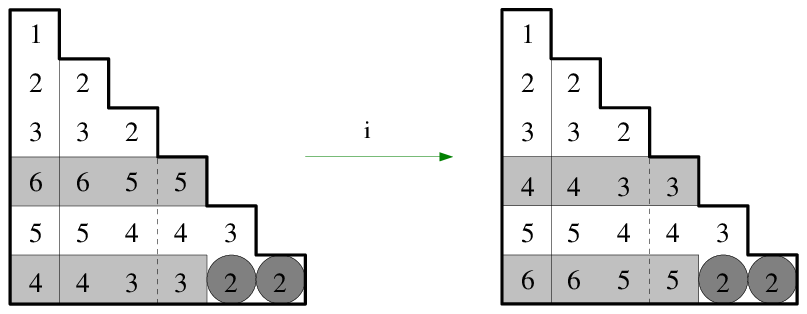}
\caption{The map $\imath$.
\label{fig:invol}}
\end{figure}

\begin{lemma} The map $\imath$ has the following properties:
\begin{itemize}
\item for all $c\in \mathcal{V}'_{\ga}$, we have $\imath(c)\in \mathcal{V}'_{\ga}$;
\item $\imath$ is an involution without fixed points;
\item $w_\ga(c)=-w_\ga(\imath(c))$ for all $c\in \mathcal{V}'_{\ga}$.

\end{itemize}
\label{lem:invol}
\end{lemma}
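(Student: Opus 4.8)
The plan is to verify the three asserted properties of $\imath$ in the order in which they are stated, the bulk of the work lying in the first and the third. Throughout I would use that $i_k$ is the \emph{last} $1$ of $\ga$, so that the $1$'s of $\ga'$ sit exactly at $i_1<\cdots<i_{k-1}$, all strictly below $i_k$; consequently $\#\{k'\le j\,:\,\ga'_{k'}=1\}=k-1$ for every $j\ge i_k-1$, and in particular $\ell=j_{min}-1\ge i_k$.

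\emph{Step 1: $\imath$ is well defined with image in $\mathcal{V}'_{\ga}$.} First I would check that $\mu^{(r)}$ really is a partition, i.e.\ that $\la^{(\ell)}_\ell\ge \la^{(r)}_{\ell+1}=k$; this follows from $\ga'$-compatibility of $\la^{(r)}$ read at the part of index $\ell\ge i_k$, which gives $\la^{(\ell)}_\ell>\#\{k'\le\ell-1:\ga'_{k'}=1\}=k-1$ (and the analogous inequality for $\la^{(r)}_\ell$ in fact shows $\mu^{(\ell)}$, a prefix of $\la^{(r)}$, is a partition of length $\ell$ with parts $\le m$). Compatibility of $\mu^{(\ell)},\mu^{(r)}$ with $\ga'$ is inherited part by part from $\la^{(r)}$ and $\la^{(\ell)}$, the only new case being the boundary part $\mu^{(r)}_{\ell+1}=k$, whose condition reads $k>\#\{k'\le\ell:\ga'_{k'}=1\}=k-1$, which holds. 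The non-intersecting property is preserved because $\imath$ only transposes the entries of rows $\ell$ and $r$ within columns $1,\dots,\ell$: for each column $j\le\ell$ both indices $\ell,r$ are in range and the entries are simply exchanged, while for $j>\ell$ row $\ell$ is too short to contribute and the column-$j$ entry of row $r$ is left untouched; so in every case the multiset of relevant column entries is unchanged. Finally $\mu^{(r)}_{j_{min}}=k$ with $j_{min}>i_k$ certifies that $\imath(c)$ is still incompatible with $\ga$, so $\imath(c)\in\mathcal{V}'_{\ga}$.

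\emph{Step 2: $\imath$ is a fixed-point-free involution.} The key point is that $\imath(c)$ has the same parameters $j_{min},r,\ell$ as $c$: for $i_k<j\le\ell$ the column-$j$ entries of $\imath(c)$ form the same multiset as those of $c$, none of them equal to $k$ by minimality of $j_{min}$ in $c$, whereas $\mu^{(r)}_{\ell+1}=k$; hence $j_{min}(\imath(c))=\ell+1$, the row carrying this value is again $r$ (uniqueness from the non-intersecting property), and $\ell(\imath(c))=\ell$. Therefore a second application of $\imath$ transposes rows $\ell$ and $r$ of $\imath(c)$ back in columns $1,\dots,\ell$, recovering $c$. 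There is no fixed point, since $\imath(c)=c$ would force $\la^{(r)}_j=\la^{(\ell)}_j$ for $j=1,\dots,\ell$, in particular $\la^{(\ell)}_\ell=\la^{(r)}_\ell$, contradicting the distinctness of the $\ell$-th parts of $\la^{(\ell)},\dots,\la^{(m)}$ (here $\ell<r\le m$).

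\emph{Step 3: $w_\ga(\imath(c))=-w_\ga(c)$ — the main obstacle.} Since $\imath$ only swaps the first parts of rows $\ell$ and $r$, the permutation of first parts changes by the transposition $(\ell\;r)$, so its sign flips; it then remains to prove $w(\mu^{(\ell)})\,w(\mu^{(r)})=w(\la^{(\ell)})\,w(\la^{(r)})$ once $b_{i_k}$ is set to $1-a_k$. Because a critical position $j$ and its factor $a_{\nu_j}+b_{j-1}$ depend only on the entries in columns $j-1$ and $j$, the critical positions lying in columns $2,\dots,\ell$ are merely redistributed (those of $\la^{(\ell)}$ move verbatim into $\mu^{(r)}$, those of $\la^{(r)}$ below column $\ell$ into $\mu^{(\ell)}$), while the critical positions in columns $\ge\ell+2$ are identical in $\la^{(r)}$ and $\mu^{(r)}$; so the two products can differ only through the factor attached to column $\ell+1$. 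That factor is $a_k+b_\ell$ in $\la^{(r)}$ exactly when $\la^{(r)}_\ell=k$ (and $1$ otherwise), and $a_k+b_\ell$ in $\mu^{(r)}$ exactly when $\la^{(\ell)}_\ell=k$ (and $1$ otherwise). If $\ell>i_k$, then $i_k<\ell<j_{min}$, so by minimality of $j_{min}$ no partition of $c$ has its $\ell$-th part equal to $k$, and both boundary factors equal $1$. If $\ell=i_k$, then whenever a boundary factor occurs it equals $a_k+b_{i_k}=a_k+(1-a_k)=1$. In every case the column-$\ell+1$ contributions agree, giving $w(\mu^{(\ell)})\,w(\mu^{(r)})=w(\la^{(\ell)})\,w(\la^{(r)})$, and together with the sign flip this yields $w_\ga(\imath(c))=-w_\ga(c)$. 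I expect the crux to be exactly this last reduction: recognizing that the single place where the relation $b_{i_k}=1-a_k$ is needed is the column-$\ell+1$ weight factor, and that any appearance of it outside the case $\ell=i_k$ is ruled out by the minimality of $j_{min}$.
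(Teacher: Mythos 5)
Your proposal is correct and follows essentially the same route as the paper's proof: the sign flips because $\imath$ acts on the permutation of first parts by the transposition $(\ell\,r)$, the critical-entry contributions are identified except possibly at position $(r,\ell+1)$, and that exceptional factor is handled exactly as in the paper by noting that minimality of $j_{min}$ forces $\ell=i_k$ whenever it occurs, whence it equals $a_k+b_{i_k}=1$ under the specialization; you merely spell out the closure, involution and fixed-point-free properties that the paper dismisses as clear. (One trivial slip: the inequality $\la^{(\ell)}_\ell\ge k$ comes from the $\ga'$-compatibility of $\la^{(\ell)}$, not of $\la^{(r)}$.)
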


\begin{proof}
The first two properties are clear from the definition. The signs of $c$
and $\imath(c)$ are opposite since the permutations attached to each
configuration differ by a transposition, namely the one that switches
$\ell$ and $r$.  Finally, one notices immediately that the the contribution
to the weight from the critical
entries are the same as a whole in $c$ and $\imath(c)$
(with possible switches between row $r$ and
$\ell$), except may be for that in position $(r,\ell+1)$.
Since $\la^{(r)}_{\ell+1}=k$, the entry $(r,\ell+1)$ is critical in $c$
or $\imath(c)$ only when
$\la^{(\ell)}_\ell$ or $\la^{(r)}_\ell$ is equal to $k$.
By the minimality of $\ell+1$, we have $\ell=i_k$ in such a case.
The contribution to the weight of this critical entry is thus
$a_k+b_{i_k}=1$ given our choice of specialization.
This completes the proof of the lemma.
\end{proof}

\begin{remark}
 There may be alternative proofs of the results of this section.
First, as observed in Lemma~\ref{lgv}, the quantity $\Sigma(\ga)$ can be
 written as a determinant for any $\ga$, not just for $\ga^0=(0,\ldots,0)$.
 Experimentations using Maple lead us to
 believe that the exact same elementary row operations as those used in the
 case $\ga^0$ can be used to compute the
 determinant in the general case. We did not manage to compute it this
 way, but such a computation might not simplify the
 whole proof anyway.

 A second observation is that the techniques used in
the general case for $\ga$ may actually be used to get rid of the
full computation of the determinant in the $\gamma^0$ case.
For this, it would suffice to \emph{show that $\Sigma(\ga^0)$
is independent of $b_1$}:
 indeed, assuming this is the case, and mimicking the proof in the case of
a general $\ga$, we would get $\Sigma(0^{m-1})=\Sigma(1,0^{m-2})$.
   But the entries of a non-intersecting triangular
 tableaux compatible with $(1,0^{m-2})$ are characterized by $\la^{(1)}=(1)$,
 $\la^{(i)}_1=\la^{(i)}_2$ for $i=2\ldots m$, and $\la^{(i)}_j>1$
 for all $i,j>1$.  From this we easily deduce
 $\Sigma(1,0^{m-2})=\prod_{2\leq i \leq m} (a_i+1-a_1)\times
 \Sigma^\uparrow(0^{m-1})$, where
  $\Sigma^\uparrow(\ga)$ is obtained from $\Sigma(\ga)$ under the
   substitutions $a_i\leftarrow a_{i+1}$. By an immediate induction,
   this would give the desired product for $\Sigma(\ga^0)$.
   Nevertheless, we did not manage to prove the independence from $b_1$
without computing the whole determinant!
\end{remark}

\begin{acknow}  We thank Sylvie Corteel for her interest in
  Identity~\ref{iden20}, and especially for having presented the identity
to PN.
\end{acknow}

\appendix
\section{A bijective proof of Proposition \ref{th:interp}}

We will prove Proposition \ref{th:interp}  bijectively in the following equivalent form:

\begin{proposition}
For $k\in \mathbb{N}$, $i>k$ and $j>k+1$, we have
\begin{equation}
\label{eq:relation_on_P}
(a_j-a_{j-k-1})\Par{j}{i}{k+1} = \left(\Par{j}{i}{k}-\Par{j}{i}{k+1}\right)-\Par{j-1}{i}{k}.
\end{equation}
\end{proposition}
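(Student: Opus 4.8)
The plan is to prove the displayed identity combinatorially: translate the three terms into weighted counts of explicit partition families, peel off a common factor to reduce to an identity on their ``top parts'', and then establish that reduced identity by an elementary telescoping supported by a bijection---which is precisely what makes the clean scalar $a_j+1-a_{j-k-1}$ transparent. Unwinding the definitions, $\Par{j}{i}{k}-\Par{j}{i}{k+1}$ is the weighted sum over the family $A$ of partitions of length $i$ with first part $j$ containing each of $j-1,\dots,j-k$ but \emph{not} $j-k-1$; $\Par{j-1}{i}{k}$ is the weighted sum over the family $B$ of partitions of length $i$ with first part $j-1$ containing each of $j-1,\dots,j-k-1$; and $\Par{j}{i}{k+1}$ is the weighted sum over the family $C$ of partitions of length $i$ with first part $j$ containing each of $j-1,\dots,j-k-1$. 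So the target becomes: the weighted sum over $A$ equals the weighted sum over $B$ plus $(a_j-a_{j-k-1})$ times the weighted sum over $C$.

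Next I would note that every partition in $A$, $B$, or $C$ splits uniquely into a \emph{head}---the block of parts $\geq j-k$ (for $A$) or $\geq j-k-1$ (for $B$ and $C$)---followed by a \emph{tail} whose parts are all $\leq j-k-2$. For a fixed head-length $L$ the tail is an arbitrary partition with parts $\leq j-k-2$ and length $i-L$; since the head/tail boundary is always a strict descent, the tail contributes a weight factor depending only on $L$ and on the tail, so the tail sum factors out and it suffices to prove, for each $L$, that the weighted sum of length-$L$ heads of $A$ equals that of $B$ plus $(a_j-a_{j-k-1})$ times that of $C$. A length-$L$ head is a partition whose set of distinct parts is exactly a prescribed block of consecutive integers, hence is determined by the positions of its descents: with $n:=L-1$, the heads of $A$ and $B$ are indexed by the $k$-subsets $S$ of $[n]:=\{1,\dots,n\}$ and those of $C$ by the $(k+1)$-subsets; and a direct computation gives the head-weight $w_A(S)=\prod_{t\in[n]\setminus S}\bigl(a_{j-\rho_S(t)}+b_t\bigr)$, the head-weight $w_B(S)$ equal to the same expression with $a_{j-\rho_S(t)}$ replaced by $a_{j-1-\rho_S(t)}$, and the head-weight $w_C(S')$ of a $(k+1)$-subset $S'$ given by the same $A$-type formula, where $\rho_S(t):=\#\{s\in S:\,s<t\}$.

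The reduced statement is then the purely algebraic identity $\sum_{|S|=k}w_A(S)=\sum_{|S|=k}w_B(S)+(a_j-a_{j-k-1})\sum_{|S'|=k+1}w_C(S')$ in the indeterminates $a_j,\dots,a_{j-k-1},b_1,\dots,b_n$, and this is where the bijection enters. For a fixed $k$-subset $S$, telescope $w_A(S)-w_B(S)$ by switching the factors one at a time from left to right along $[n]\setminus S$; the $q$-th summand carries a loose scalar $a_{j-\rho_S(t_q)}-a_{j-1-\rho_S(t_q)}$ times a product of linear factors which, once $t_q$ is inserted into $S$, is exactly $w_C(S')$ for $S'=S\cup\{t_q\}$---the key point being that inserting $t_q$ raises $\rho$ by exactly one at the positions lying to the right of $t_q$, which matches the factors that have already been switched. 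Since $(S,t_q)\longleftrightarrow(S'=S\cup\{t_q\},\,t_q)$ is a bijection between a $k$-subset with a chosen non-member and a $(k+1)$-subset with a chosen member, summing over all of them gives $\sum_{|S'|=k+1}w_C(S')\sum_{u\in S'}\bigl(a_{j-\rho_{S'}(u)}-a_{j-\rho_{S'}(u)-1}\bigr)$; and since the $k+1$ members of any $S'$ realize exactly the values $\rho_{S'}=0,1,\dots,k$, the inner sum collapses by telescoping to $a_j-a_{j-k-1}$, which is precisely the reduced identity.

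The step I expect to be the real obstacle is the bookkeeping in the reduction: checking that the head/tail decomposition is simultaneously valid for all three families so that the tail sum genuinely factors out, and---more error-prone---verifying that the subset encoding of heads yields \emph{exactly} the weights $w_A,w_B,w_C$ above, with the $b$-indices and the shift $a_{j-\rho}\mapsto a_{j-1-\rho}$ for $B$ correctly aligned with the positions. Once that is settled, the closing telescoping-plus-bijection is short, and it is the part that genuinely explains why the answer is the single linear factor $a_j+1-a_{j-k-1}$.
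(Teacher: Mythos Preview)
Your argument is correct and takes a genuinely different route from the paper's. The paper introduces an auxiliary object---\emph{extended partitions}, namely elements of $\PPar{j}{i}{k+1}$ decorated with a left or right arrow at one of the $k+1$ strict descents among their large parts---assigns them signed weights, and recovers both sides of the identity by double counting: the left side falls out immediately from summing all extensions of a fixed $\lambda$, while the right side requires a sign-reversing involution $\Psi$ on ``bad'' extensions followed by two bijections $\Theta_L,\Theta_R$ onto the families counted by $\Par{j}{i}{k}-\Par{j}{i}{k+1}$ and $\Par{j-1}{i}{k}$. You instead factor off the common tail (parts $\le j-k-2$), reduce to a polynomial identity on descent-subsets of $[L-1]$, and prove that by a one-line telescope together with the obvious bijection between a $k$-subset with a marked non-element and a $(k+1)$-subset with a marked element. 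Both proofs ultimately rest on the same inner collapse $\sum_{r=0}^{k}(a_{j-r}-a_{j-r-1})=a_j-a_{j-k-1}$, but yours isolates it much more cleanly and avoids the auxiliary object and the case analysis of $\Psi,\Theta_L,\Theta_R$; the paper's version, in exchange, stays entirely at the level of partitions and never passes through an abstract polynomial identity. One small slip in your write-up: for the product over $[n]\setminus S'$ to equal $w_C(S')$ you need the $B$-type factors to sit at the positions to the \emph{right} of $t_q$ (those are the ones whose $\rho$ increases upon inserting $t_q$), so in the telescope these are the factors not yet switched if you proceed left to right---your phrase ``already been switched'' points the wrong way, but the computation you describe is the correct one.
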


\begin{proof}
 The proof relies on the introduction of a new object:
for $k>0$, an \emph{extended partition} is defined as a
partition $\lambda=(\lambda_1,\ldots,\lambda_i) \in \PPar{j}{i}{k}$
with
a right or left arrow, where the right or left arrow is
located between two successive parts
$\lambda_u$ and $\lambda_{u+1}$ such that
$\la_u>\la_{u+1}=\la_u-1 \geq j-k$.  We say in this case that
$u$ is the position of the arrow
of the extended partition.
For instance, associated to the partition
  $\mu=(6,6,5,5,5,4,2,2,1)\in\PPar{6}{9}{2}$ are the four extensions:
  $$(6,6\overrightarrow{,}5,5,5,4,2,2,1),(6,6\overleftarrow{,}5,5,5,4,2,2,1),
  (6,6,5,5,5\overrightarrow{,}4,2,2,1),(6,6,5,5,5\overleftarrow{,}4,2,2,1) \, ,$$
whose arrows are respectively in positions 2,2,5 and 5.
  We will naturally call left (respectively right) extended partitions those
  with an arrow oriented to the left (resp. to the right), and define
  $\EPPar{j}{i}{k}$ as the set of all extensions of
partitions in  $\PPar{j}{i}{k}$.
The weight of a left (resp. right) extension
of $\lambda$ whose arrow
is in position $u$  is by definition
  the weight of $\lambda$, multiplied by $(a_{\la_u}+b_u)$
  (resp. $-(a_{\la_{u+1}}+b_u)$).
The weights of the four extended partitions above are
  then $w(\mu)$ multiplied respectively by $-(a_5+b_2),(a_6+b_2),-(a_4+b_5)$
  and $(a_5+b_5)$.\medskip

We will now show that both sides of Equation~(\ref{eq:relation_on_P})
are in fact equal to the weighted sum of $\EPPar{j}{i}{k+1}$,
by {\em double counting} this last set.

We consider all the extensions of a given
partition $\lambda \in \PPar{j}{i}{k+1}$. There are clearly $k+1$
left extensions and $k+1$ right extensions of $\la$; if $(u_r)_{r=0\ldots k}$
are the possible positions for the arrows in $\la$, then the weighted sum of
 these $2(k+1)$ extensions is equal to
$$w(\lambda)\left(\sum_{r=0}^{k}(a_{j-r}+b_{u_r})+\sum_{r=0}^{k}-(a_{j-r-1}+b_{u_r})
\right) =w(\lambda)(a_j-a_{j-k-1}) \, .$$
So we obtain indeed the L.H.S. of (\ref{eq:relation_on_P})
 as the total weight of $\EPPar{j}{i}{k+1}$; the proof that it is
also equal to the R.H.S. of (\ref{eq:relation_on_P})
is more involved.

 First, we use
a sign reversing involution $\Psi$ on a certain subset of these extended
partitions. We say that an extended partitions
$\vec{\lambda} \in \EPPar{j}{i}{k+1}$ associated to $\lambda$ (and whose arrow
is in position $u$) is {\em bad}
if one of the following conditions is satisfied:
\begin{enumerate}
\item $\vec{\la}$ is a left extension, and there exists a $v\geq u+1$ such that $\la_v=\la_{v+1}\geq j-k-1$.
\item $\vec{\la}$ is a right extension, and there exists a $v\leq u$ such that
  $\la_{v-1}=\la_v$.
\end{enumerate}

For example, among the four extensions of the partition $\mu$ above,
the first three are bad, and the last one is good (i.e. not bad).
Consider now the following function $\Psi$ on bad extended partitions: if
$\vec{\lambda}$ is
a left extension, choose $v$ minimal in the previous definition;
then $\Psi(\vec{\lambda})$ is defined as
$$(\la_1,\ldots,\la_u,\la_{u+1}+1,\la_{u+2}+1,\ldots,
\la_v+1\overrightarrow{,}\la_{v+1},\ldots,\la_i)$$
And if $\vec{\lambda}$ is a
right extension, choose $v$ maximal in the definition;
$\Psi(\vec{\lambda})$ is then defined as
$$(\la_1,\ldots,\la_{v-1}\overleftarrow{,}\la_v-1,\la_{v+1}-1,\ldots,
\la_u-1,\la_{u+1},\ldots,\la_i)$$

It is then easy to see that $\Psi$ is well defined, is an involution,
and that the weights of $\vec{\la}$ and ${\Psi(\vec{\la})}$ are opposite.
So the weighted sum of $\EPPar{j}{i}{k+1}$ is equal to the sum restricted to
the {\em good} extended partitions,
and we thus need to show that this latter sum is indeed equal to the R.H.S. of
\eqref{eq:relation_on_P}.
 Notice that $\vec{\la} \in \EPPar{j}{i}{k+1}$ is good
iff it is a left extension and there is exactly one part in $\lambda$
of each of the sizes $\la_{u+1},\ldots,j-k-1$,
or it is a right extension and there is exactly one part in $\lambda$
of each of the sizes $j,\ldots,\la_u$.

 There is a bijection $\Theta_L$ between good left extended partitions, and
 partitions in $\PPar{j}{i}{k}$ with at least two equal parts of size
 superior to $j-k-1$, and no part of size $j-k-1$.
  $\Theta_L(\vec{\lambda})$ is obtained from
$\vec{\lambda}$ by deleting the arrow, and increasing by one
the parts $\lambda_{u+1},\ldots,\lambda_v$, where $u$ is the position of the
arrow and $v$ is such that $\lambda_v=j-k-1$. $\Theta_L$ is weight preserving, and the
 weight of its image can be written as
  \begin{equation}
  \label{eqLeft}
  (\Par{j}{i}{k}-\Par{j}{i}{k+1})-L_{j,i}^{[k]} \, ,
  \end{equation}
where ($\Par{j}{i}{k}-\Par{j}{i}{k+1}$) is
 the weight of partitions in $\PPar{j}{i}{k}$
with no part of size $j-k-1$, and $L_{j,i}^{[k]}$ gives the weights of
partitions in $\PPar{j}{i}{k}$ that have exactly one part
of each of the sizes $j,\ldots ,j-k$, and no part of size $j-k-1$.

Then, there is also a bijection $\Theta_R$ between good right extended partitions,
 and partitions in $\PPar{j-1}{i}{k}$ with at least two
equal parts of size between $j-k-1$ and $j-1$.
$\Theta_R(\vec{\lambda})$ is obtained from $\vec{\lambda}$ by deleting the arrow
and by loweringing by one the parts $\lambda_1,\ldots,\lambda_u$,
where $u$ is the position of the arrow.
$\Theta_R$ is weight {\it reversing}, and the weight of its image is
\begin{equation}
  \label{eqRight}
\Par{j-1}{i}{k}-R_{j-1,i}^{[k]} \, ,
\end{equation}
where $R_{j-1,i}^{[k]}$ is the weighted sum of the partitions in
$\PPar{j-1}{i}{k}$ that have exactly one part of each of the sizes
$j-1,\ldots ,j-k-1$.\medskip

 Putting everything together, we have that the weighted sum of
 $\EPPar{j}{i}{k+1}$ is equal to its restriction to good partitions,
 which in turn is equal to \eqref{eqLeft} minus \eqref{eqRight}
 thanks to the weight preserving bijection $\Theta_L$ and
the weight reversing bijection $\Theta_R$.
 But we also have that $R_{j-1,i}^{[k]}=L_{j,i}^{[k]}$ through the weight
 preserving bijection
 that increases by $1$ the first $k$ parts of a partition.
  Thus, we obtain indeed the R.H.S. of
Equation~(\ref{eq:relation_on_P}) as the weighted sum of
 $\EPPar{j}{i}{k+1}$, and the proof is complete.
\end{proof}

\end{document}